\newtheorem{fed}{Definition}[section]
\theoremstyle{definition}
\newtheorem{teo}[fed]{Theorem}
\newtheorem*{teo*}{Theorem}
\newtheorem{lem}[fed]{Lemma}
\newtheorem{cor}[fed]{Corollary}
\newtheorem{pro}[fed]{Proposition}
\theoremstyle{definition}
\definecolor{azul}{rgb}{0.1,0.6,0.86}
\definecolor{titleblue}{rgb}{0.13,0.49,0.69}
\definecolor{mylred}{rgb}{0.85,0.24,0.2}
\definecolor{myblue}{rgb}{0,0.33,0.55}
\definecolor{myyellow}{rgb}{0.42,0.24,0.52}
\definecolor{mygreen}{rgb}{0.12,0.5,0.29}
\definecolor{myred}{rgb}{0.74,0.13,0.13}
\definecolor{mylblue}{rgb}{0.2,0.75,1}
\definecolor{mylgreen}{rgb}{0.68,0.98,0.6}
\definecolor{mylyellow}{rgb}{0.86,0.85,0.55}
\definecolor{myllyellow}{rgb}{0.87,0.86,0.56}
\definecolor{naranja}{RGB}{249,153,96}
\definecolor{sidebardarkcolor}{rgb}{0.21,0.31,0.40}
\definecolor{sidebarlightcolor}{rgb}{0.7,0.77,0.836}
\def\bdem{\begin{proof}}
	\def\edem{\end{proof}}
\def\fii{\varphi }
\def\la{\lambda}
\def\La{\Lambda}
\def\w{\omega}
\def\W{\Omega}
\def\N{\mathbb{N}}
\def\Z{\mathbb{Z}}
\def\R{\mathbb{R}}
\def\C{\mathbb{C}}
\def\T{\mathbb{T}}
\def\ele{\mathcal{L}}
\def\ete{\mathcal{T}}
\newcommand{\peso}[1]{ \quad \mbox{  #1 } \quad }
 \DeclareMathOperator{\tr}{tr}
\newcommand{\esssup}{{\mathrm{ess}\sup}}
\newcommand{\supp}{{\rm supp\,}}
\begin{document}
	
	\title{Variations on two Cabrelli's works}
	
	\author{Elona Agora}
	\address{ CUNEF Universidad, Madrid, Spain}
	\email{elona.agora@cunef.edu}
	
	\author{Jorge Antezana}
	\address{ Barcelona University, Barcelona, Spain}
	\email{jorge.antezana@ub.edu}
	
	\author{Diana Carbajal}
	\address{ University of Vienna, Vienna, Austria }
	\email{diana.agustina.carbajal@univie.ac.at}

	\date{}
	
	
	\maketitle
	
	\begin{center}
		\textit{To our dear friend, Carlos Cabrelli}
	\end{center}
	
	\medskip
	
	\begin{abstract}
		In this paper we present two different problems within the framework of shift-invariant theory. First, we develop a triangular form for shift-preserving operators acting on finitely generated shift-invariant spaces. In case of the normal operators, we recover a diagonal decomposition. The results show, in particular, that any finitely generated shift-invariant space can be decomposed into an orthogonal sum of principal shift-invariant spaces, with additional invariance properties under a shift-preserving operator.
		Second, we provide a new characterization of the multi-tiling sets $\Omega\subset\R^d$ of positive measure for which $L^2(\Omega)$ admits a structured Riesz basis of exponentials that is formulated in the ambient space $\T^{k\times k}$. In addition, we show a simpler sufficient condition which generalizes the \textit{admissibility} property, that is also necessary for 2-tiling sets.
	\end{abstract}

	\section{Introduction}
	
	In this contribution, we address two different yet connected problems that are related to the theory shift-invariant spaces, and are  inspired by some developments from the prolific work of Cabrelli \cite{ACCP,AAC,CC,CUC}.
	
	Throughout this paper, a shift-invariant space is a subspace $V$  of $L^2(\R^d)$ that is invariant under translations of a certain lattice $H\subset \R^d$; that is, $T_hV\subset V$ for every $h\in H$, where $T_hf = f(\cdot - h)$. These spaces play a central role in areas within harmonic analysis such as wavelet and Gabor theory, approximation theory, sampling theory and signal processing.  Many questions formulated in shift-invariant spaces become tractable through tools such as the \textit{fiberization mapping} and the \textit{range function} (c.f. Def. \ref{isometria} and \ref{range-function}, respectively) \cite{He64,BDR2,RS,B}.
	When the space is generated by the shifts of a finite family of functions, its global complexity is essentially encoded by a measurable field of finite-dimensional spaces (the \textit{fiber spaces}) indexed by a compact domain. This perspective reduces infinite-dimensional problems to parametrized families of finite-dimensional ones, making it possible to tackle the questions using linear algebraic techniques. 
	
	\medskip
	
	\noindent In this paper, we focus on the following two problems:
	\begin{enumerate}
		\item[(P1)]\label{p1} \textit{Develop a triangular form for shift-preserving operators acting on finitely generated shift-invariant spaces.}
		\item[(P2)]\label{p2} \textit{Provide a new characterization of the multi-tiling sets $\Omega\subset\R^d$ of positive measure for which $L^2(\Omega)$ admits a Riesz basis of exponentials with a periodic set of frequencies.}
	\end{enumerate}
	
	\medskip
	
	\noindent The first problem (P1) builds upon the theory of shift-preserving operators. These are the operators acting on shift-invariant spaces which commute with the shifts of the underlying lattice. The analysis of such operators can be reduced to studying their fiber representations via the \textit{range operator} (c.f. Def. \ref{range operator}), a field of linear transformations that act on the fiber spaces of the underlying shift-invariant space. Many global properties of a shift-preserving operator are reflected by pointwise properties of the range operator, holding uniformly almost everywhere \cite{B,BI2019}. 
	
	One advantage of this fiber perspective is the possibility to derive global canonical decompositions of those operators that act on finitely generated shift-invariant spaces, through the analysis of the local decompositions of the range operator. An initial approach to this problem was presented in \cite{ACCP}, where a diagonal-type decomposition for normal shift-preserving operators was developed. More precisely, let $V\subset L^2(\R^d)$ be a finitely generated shift-invariant space under translations of a lattice $H\subset \R^d$, and let $L:V\to V$ be a normal shift-preserving operator. Then $L$ can be decomposed into an orthogonal sum of operators densely defined as
	$$\La_a = \sum_{h\in H} a(h) T_h,$$
	where $a={a(h)}_{h\in H}\in \ell^2(H)$ and its Fourier transform is essentially bounded. These operators are often called \textit{$s$-eigenvalues} of $L$ in \cite{ACCP} (c.f. Def. \ref{s-eigenval}). 
	
	In (P1) we introduce a new type of canonical decomposition for (not necessarily normal) shift-preserving operators acting on finitely generated shift-invariant spaces, which we call \textit{triangularization}. Our main result (Theorem \ref{triangular}) shows that there exist shift-invariant subspaces of $V$, 
	$$
	V=V_\ell\supsetneq \dots\supsetneq V_{1},
	$$ 
	such that for every $1\leq j\leq\ell$,  $\ele(V_j)=j$, and $ L(V_j)\subseteq V_j$. Here, $\ele(W)$ denotes the smallest number of generators of a shift-invariant space $W$. The proof mirrors the ideas behind the classical Schur decomposition of finite-dimensional linear algebra. In the particular case where the operator is normal, we use the triangular form to deduce a type of diagonal decomposition (Corollary~\ref{spectral-cor}). Specifically, we show that there exists an orthogonal decomposition of $V$ into \textit{principal} shift-invariant subspaces which are \textit{reducing} for $L$. As a consequence, we are able to recover the main result of \cite{ACCP}. Our canonical decomposition results extend, for finitely generated spaces, the classical decomposition theorem of \cite{B} by incorporating the additional structure induced by the invariance under a certain shift-preserving operator.
	
	\medskip
	
	\noindent The second problem (P2) deals with the central question of the existence of bases of exponential functions. The main interest in studying this question comes from the classical sampling theory in Paley-Wiener spaces. Given a measurable set~$\Omega\subset\R^d$, the space $PW_\Omega$ denotes the subspace of $L^2(\R^d)$ consisting of the functions whose Fourier transform is supported on $\Omega$, often called the \textit{spectrum}. These spaces are a particular example of shift-invariant spaces, as they are invariant under any translation. A discrete set $\Gamma\subset\R^d$ is a sampling and interpolation set for $PW_\Omega$ if and only if the system $$E(\Gamma):=\left\{e_\gamma\,:\,\gamma\in\Gamma\right\} \quad \text{where}\quad e_\gamma(\w) := e^{2\pi i \gamma \omega} $$ is a  {\it Riesz basis of exponentials} of $L^2(\Omega).$
	This means that $E(\Gamma)$ is complete in $L^2(\Omega)$ and satisfies that
	\begin{equation}\label{eq:RBE}
		A\,\sum_{\gamma\in\Gamma} |c_\gamma|^2\,\leq\;\;\; \left\|\sum_{\gamma\in\Gamma} c_\gamma e^{2\pi i \gamma\w}\right\|^{2}_{L^2(\Omega)}\leq \,B\,\sum_{\gamma\in\Gamma}|c_\gamma|^2,\quad\forall \{c_\gamma\}\in\ell^2(\Gamma),
	\end{equation}
	for some positive constants $A,B$. From the sampling point of view, the set $\Gamma$ has the property that, for any function $f\in PW_\Omega$, if we know the values of the function $f$ at the points of $\Gamma$, then we have enough information to reconstruct the function uniquely. 
	
	While orthonormal systems of exponentials are the most desirable type of basis for $L^2(\Omega)$, their existence imposes a very rigid structure on $\Gamma$ and $\Omega$. In fact, it has been proved that there exist sets $\Omega\subset\R^d$ that do not admit such a basis~\cite{Fu01,Ko00,La01}. The study of these systems is closely linked to Fuglede's conjecture \cite{Fu74}, which claimed that a set $\Omega$ admits an orthogonal basis of exponentials if and only if it tiles the space by translations along some discrete set $\Lambda\subset \R^d$, that is, $$\sum_{\lambda\in\Lambda} \chi_{\Omega}(\w+\lambda) = 1 \quad \text{for a.e. }\w\in\R^d.$$ 
	However, the conjecture was disproved in both directions for dimensions $d\geq 3$ \cite{T04, KM06, FMM06, FR06} and it remains open for $d=1,2$. Despite these counterexamples, the conjecture does hold in several important cases \cite{La01,IKT03, LM22}. Notably, Fuglede himself proved that if $\Lambda$ is a full lattice in $\R^d$, then $\Omega$ tiles $\R^d$ by translations on $\Lambda$ if and only if the system $E(H)$ forms an orthogonal basis of $L^2(\Omega)$, where $H$ is the dual lattice of $\Lambda$ \cite{Fu74}. 
	
	Given these limitations, it is natural to turn to more flexible systems. Riesz bases of exponentials provide a robust alternative to orthonormal sets: although they lack orthogonality, they still allow stable and unique representations in $L^2(\Omega)$. However, determining whether a given set admits such a basis remains highly nontrivial. To date, only a few positive examples are known \cite{Ma06,GL14, Kol, KN15, KN16, GL18, DL22}, and remarkably, only one example has been found not to admit a Riesz basis of exponentials \cite{KNO23}.

	In this paper we focus on a particular family of sets known to admit Riesz bases of exponentials: the \textit{multi-tiling} sets. A set $\Omega\subset \R^d$ multi-tiles $\R^d$ at level $k$ by translations on a discrete set $\Lambda$ (or $k$-tiles $\R^d$) if $$\sum_{\lambda\in\Lambda}\chi_{\Omega}(\w+\lambda)=k\quad\text{for a.e. }\w\in\R^d.$$
	It was proved in \cite{GL18} that every \textit{Riemann integrable} set $\Omega$ which $k$-tiles $\R^d$ by translations on a full lattice $\Lambda$ admits a Riesz basis of exponentials. In  \cite{Kol},  Kolountzakis established the same conclusion for every \textit{bounded} set $\Omega$ with this tiling property. Moreover, he showed that the basis can be chosen with a periodic set of frequencies
	$$
	\{e_{a_j+h}\,:\,h\in H, \,j=1,\dots,k\},
	$$
	for some $a_1,\dots,a_k\in\R^d$, where $H$ is the dual lattice of $\Lambda$. We refer to it as  a {\it structured} system of exponentials. This result was later extended to the context of locally compact abelian (LCA) groups in \cite{AAC}. The extension was based on the theory of shift-invariant spaces and fiberization techniques. This perspective also allowed the authors to establish in \cite{AAC}  that the converse is true; that is, the existence of a structured Riesz basis of exponentials for $L^2(\Omega)$ implies that $\Omega$ is a multi-tiling set. 
	
	The question of whether this result remains valid for {\it unbounded} multi-tile sets of finite measure was answered  in the negative also in \cite{AAC}, where the authors constructed an explicit example of an unbounded $2$-tile set of $\R$ by translations on $\Z$ that does not admit a structured Riesz basis of exponentials. However, the authors in \cite{CC} observed that a special class of (not necessarily bounded) multi-tiling sets, namely, those which are \textit{admissible} \eqref{admissible}, always support a structured Riesz basis. The class of admissible multi-tiles contains, in particular, all the bounded multi-tiles. 
	
	Admissibility is, nevertheless, only a sufficient condition, as remarked in \cite{CC}. This raised the problem of precisely characterizing the class of multi-tiles that support a structured Riesz basis of exponentials. This question was fully settled in \cite{CUC} through a characterization involving the Bohr compactification of the underlying lattice. While this result is both general and elegant, the condition requires verifying non-trivial properties in a compactified space of highly abstract nature. 
	
	In this paper, we present an alternative characterization of those multi-tiling sets that admit a structured Riesz basis of exponentials. While our findings are inspired by the techniques in \cite{CUC}, we are able to formulate an equivalent condition in the more tractable ambient space $\T^{k\times k}$ (see Theorem \ref{nueva caracterizacion}). The condition requires verifying that the determinant function is not null on a closed subspace of $\T^{k\times k}$ which depends on $\Omega$ and the vectors $a_1,\dots,a_k\in\R^d$. Moreover, we introduce a new simple geometric property (see Def. \ref{def:sep}) that relaxes the admissibility condition, and show that the multi-tiling sets for which this property holds admit a structured Riesz basis of exponentials (Theorem \ref{separados}). Notably, this condition is also necessary for the special case of $2$-tiling sets (Theorem \ref{necesario para 2tilings}).
	
	\medskip
	
	\noindent We remark that all the results in this paper can be extended to the LCA group context with minimal  modifications.
	
	\medskip
	
	\noindent The paper is organized as follows. In Section \ref{sec-2} we review the main ideas of the theory of shift-invariant spaces that will be used throughout the paper. In Section \ref{sec-3}, we focus on the first postulated problem (P1). Finally, we devote Section \ref{sec-4} to problem (P2).

	\section{Shift-invariant spaces}\label{sec-2}
	
	In this section, we recall some of the basic facts about shift-Invariant spaces (SIS), which will be used throughout this article. The interested reader is referred to \cite{B, BDR2,CP, Chr2003} for the proofs, as well as much more information about these spaces. 
	
	Let $\La\subset\R^d$ be a  \textit{full lattice}, i.e. there exists $M\in GL_d(\R)$ such that $\La = M\Z^d$. A  \textit{fundamental domain} with respect to a lattice $\La$ is a Borel set of representatives of the quotient group $\R^d/\La$. In our setting, we will always take the set $I=M\left([-\frac12,\frac12)^d\right)$ as a fundamental domain. The  \textit{dual lattice} of $\La$ is the set
	$$H:=\{h\in\R^d\,:\, \langle h,\la \rangle \in \Z, \; \text{for all} \, \la\in\La\},$$
	which coincides with $H=(M^{t})^{-1}\Z^d$.
	Throughout the remainder of this work, we will fix a lattice $\La$ and $H$ will always denote its dual.
	
	\begin{fed}
		We say that a closed subspace $V\subset L^2(\mathbb R^d)$ is shift-invariant under translations of $H$ or {\it $H$-invariant} if for each $f\in V$ we have that $T_h f\in V$, $\forall\,h \in H$, where $T_h f(x)= f(x-h)$ is the translation by $h$.
	\end{fed}
	
	Given a countable set of functions $\Phi\subset L^2(\mathbb R^d)$, we denote by 
	$$S(\Phi) := \overline{\text{span}} \left\{ T_h\varphi\,:\,\varphi \in \Phi, \,h\in H \right\}$$
	the $H$-invariant space generated by $\Phi$. 
	When the translations of the set $\Phi$ by $H$ form a Parseval frame of $S(\Phi)$ we simply say that $\Phi$ is a Parseval frame generator of $S(\Phi)$.
	When $\Phi$ is a finite set, we say that $V=S(\Phi)$ is a  \textit{finitely generated} shift-invariant space and, when $\Phi=\{\varphi\}$ for some function $\varphi$, we say that $V$ is a  \textit{principal} shift-invariant space. The  \textit{length} of a finitely generated shift-invariant space $V\subset L^2(\R^d)$ is denoted by $\mathcal{L}(V)$ and is defined as the smallest natural number $\ell$ such that there exists $\varphi_{1},...,\varphi_{\ell} \in V$ with $V=S(\varphi_{1},...,\varphi_{\ell})$.

	Much of the structure of shift-invariant subspaces of $L^2(\R^d)$ can be understood through the  \textit{fiberization mapping}. This key transformation associates each function $f\in L^2(\R^d)$ with an $\ell^2(\La)$-valued square-integrable function on $I$. Explicitly, we consider the Hilbert space $$L^2\left(I,\ell^2(\La)\right),$$ which consists of all vector-valued measurable functions $\psi: I\rightarrow \ell^2(\La)$ such that
	$$\|\psi\|=\left( \int_{I} \|\psi(\omega)\|^2_{\ell^2} \,d\omega\right)^{1/2} < \infty.$$
	The mapping is defined as follows.
	
	\begin{fed}\label{isometria}
		The fiberization mapping $\mathcal T: L^2(\mathbb R^d) \to L^2\left(I,\ell^2(\La)\right)$ is defined by
		$$
		\mathcal T f(\omega) = \{\hat{f} (\omega+\la) \}_{\la \in  \La},
		$$
		where $\hat{f}$ denotes the Fourier transform of $f$.
		The sequence $\mathcal T f(\omega)$ is referred to as the fiber of $f$ at $\omega \in I$. 
	\end{fed}
	
	Observe that the fiberization mapping is an isometric isomorphism, which is related to the periodization identity  
	$$
	\int_{\R^d} f(x) \,dx =  \int_{I} \sum_{\la\in\La} f(\w+\la)\,d\w, \quad  f\in L^1(\R^d).
	$$
	
	This fiber-wise perspective naturally leads to the concept of  \textit{range function}.
	
	\begin{fed}\label{range-function}
		A {\it range function} is a mapping
		\begin{align*}
			J:I&\rightarrow\{\text{\,closed subspaces of }\ell^2(\La)\,\}\\
			\omega&\mapsto J(\omega).
		\end{align*}
		By identifying the subspaces of $\ell^2(\La)$ with the corresponding orthogonal projector, we say $J$ is measurable if the scalar function $\omega\mapsto\langle P_{J(\omega)}u,v \rangle$ is measurable for every $u,v\in\ell^2(\La)$. Here, $P_{J(\omega)}$ is the orthogonal projection of $\ell^2(\La)$ onto $J(\omega)$. 
	\end{fed}

	The significance of range functions is highlighted by a fundamental characterization theorem, originally due to Helson \cite[Theorem 8]{He64}. It states that every shift-invariant space can be described as the collection of functions whose fibers lie in a measurable family of subspaces, precisely those defined by a range function. We present here the characterization as stated in \cite[Proposition 1.5]{B}.
	\begin{teo}\label{thm:characterization-sis}
		A closed subspace $V\subset L^{2}(\mathbb{R}^{d})$ is shift-invariant if and only if there exists a measurable range function such that
		\begin{equation}\label{eq: associated range function}
			V=\{f\in L^{2}(\mathbb{R}^{d}): \mathcal{T}f(\omega) \in J(\omega) \text{ for a.e. } \omega \in I\}.
		\end{equation}
		Furthermore, if $V=S(\Phi)$ for some coun\-table set $\Phi\subset L^2(\mathbb R^d)$, then 
		\begin{equation*}
			J(\omega) = \overline{\text{span}}\{\mathcal T f(\omega)\,:\,f\in\Phi\,\}
		\end{equation*} 
		for a.e. $\omega\in I$.
		Under the convention that two range functions are identified if they are equal a.e. $\omega\in I$, the correspondence between $V$ and $J$ is one-to-one. We call the subspace $J(\omega)$ the  \textit{fiber space} of $V$ at $\omega$. 
	\end{teo}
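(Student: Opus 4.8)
The plan is to push everything through the fiberization isometry $\mathcal{T}$ and work inside $L^2(I,\ell^2(\La))$. The starting point is the intertwining relation $\mathcal{T}(T_hf)(\omega)=e_h(\omega)\,\mathcal{T}f(\omega)$ for $h\in H$, where $e_h(\omega):=e^{-2\pi i h\omega}$: this follows from $\widehat{T_hf}(\xi)=e^{-2\pi i h\xi}\hat f(\xi)$ together with the fact that, since $h\in H$, the phases $e^{-2\pi i h\la}$ with $\la\in\La$ are all equal to $1$ and therefore factor out of every coordinate of the fiber. Hence $\mathcal{T}$ conjugates the translations $\{T_h\}_{h\in H}$ to the scalar modulations $\{e_h\cdot\}_{h\in H}$ acting on $L^2(I,\ell^2(\La))$, and $V$ is $H$-invariant if and only if $M:=\mathcal{T}(V)$ is a closed subspace invariant under multiplication by every $e_h$, $h\in H$. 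Since the trigonometric polynomials $\overline{\text{span}}\{e_h:h\in H\}$ are dense in $C(\R^d/\La)$ and $w^*$-dense in $L^\infty(I)$, a routine dominated-convergence argument upgrades this to invariance of $M$ under multiplication by every scalar $\phi\in L^\infty(I)$. The ``if'' direction of the theorem is then immediate: for a measurable range function $J$ the space $M_J:=\{\psi\in L^2(I,\ell^2(\La)):\psi(\omega)\in J(\omega)\text{ a.e.}\}$ is closed (pass to an a.e.\ convergent subsequence) and invariant under each $e_h\cdot$ because $J(\omega)$ is a subspace, so $\mathcal{T}^{-1}(M_J)$ is $H$-invariant.

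For the ``only if'' direction I would build $J$ explicitly. Fix an $L^2$-dense sequence $\{\psi_n\}\subset M$ and set $J(\omega):=\overline{\text{span}}\{\psi_n(\omega):n\in\N\}$. Measurability of $\omega\mapsto P_{J(\omega)}$ in the sense of Definition \ref{range-function} is obtained from a pointwise Gram--Schmidt orthonormalization of $\{\psi_n(\omega)\}$, producing countably many measurable fields $\omega\mapsto e_k(\omega)$ with $J(\omega)=\overline{\text{span}}\{e_k(\omega)\}$ and $P_{J(\omega)}=\sum_k\langle\cdot,e_k(\omega)\rangle e_k(\omega)$; some care is needed because $\dim J(\omega)$ may jump. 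The inclusion $M\subseteq M_J$ holds because every $\psi\in M$ is an $L^2$-limit of some $\psi_{n_j}$, hence an a.e.\ limit along a further subsequence, and $J(\omega)$ is closed. The reverse inclusion $M_J\subseteq M$ is where $L^\infty(I)$-invariance is used: if $\psi\in M_J$ is $L^2$-orthogonal to $M$, then $\phi\psi_n\in M$ for all $n$ and all $\phi\in L^\infty(I)$, so $0=\langle\psi,\phi\psi_n\rangle_{L^2}=\int_I\overline{\phi(\omega)}\,\langle\psi(\omega),\psi_n(\omega)\rangle\,d\omega$; letting $\phi$ range over $L^\infty(I)$ forces $\langle\psi(\omega),\psi_n(\omega)\rangle=0$ for a.e.\ $\omega$ and every $n$, whence $\psi(\omega)\perp J(\omega)$ a.e., and combined with $\psi(\omega)\in J(\omega)$ this gives $\psi=0$. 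Thus $M=M_J$, and transporting back by $\mathcal{T}^{-1}$ yields the description \eqref{eq: associated range function}.

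For the ``furthermore'' clause, with $V=S(\Phi)$ and $\Phi$ countable, put $J'(\omega):=\overline{\text{span}}\{\mathcal{T}f(\omega):f\in\Phi\}$. Since each $\mathcal{T}f$ with $f\in\Phi$ lies in $M=M_J$, we get $\mathcal{T}f(\omega)\in J(\omega)$ for a.e.\ $\omega$, and countability of $\Phi$ gives $J'(\omega)\subseteq J(\omega)$ a.e. Conversely, $J'$ is measurable by the same Gram--Schmidt argument, $M_{J'}$ is a modulation-invariant closed subspace, and it contains every generator $\mathcal{T}(T_hf)=e_h\,\mathcal{T}f$ of $M$, so $M\subseteq M_{J'}$; applying the first part to $M$ (that is, $M=M_J\subseteq M_{J'}$) yields $J(\omega)\subseteq J'(\omega)$ a.e. Finally, the one-to-one correspondence follows by contradiction: if $J_1(\omega)\neq J_2(\omega)$ on a set $E$ of positive measure, say $J_1(\omega)\not\subseteq J_2(\omega)$ there, a measurable selection produces a nonzero measurable field $v$ on $E$ with $v(\omega)\in J_1(\omega)\ominus J_2(\omega)$; then $v\,\chi_E\in M_{J_1}\setminus M_{J_2}$, contradicting $M_{J_1}=M_{J_2}$.

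I expect the main obstacles to be the measure-theoretic bookkeeping: carrying out a Gram--Schmidt procedure that is genuinely measurable in $\omega$ even though the fiber dimension varies (so that $P_{J(\omega)}$ is measurable), and the measurable selection of the separating vector field in $J_1(\omega)\ominus J_2(\omega)$ for the uniqueness statement. By contrast, the algebraic core --- upgrading $\{e_h\}_{h\in H}$-invariance to $L^\infty(I)$-invariance and the short orthogonality computation giving $M_J\subseteq M$ --- is light. This is, in essence, the Helson--Bownik argument \cite{He64,B}, which we will follow, keeping track of the finitely generated case for the ``furthermore'' part.
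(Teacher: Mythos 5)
Your argument is correct, but note that the paper does not prove this theorem at all: it is quoted as a known result, with the reader referred to Helson's \cite{He64} and Bownik's \cite{B} for the proof. What you have written is precisely that classical Helson--Bownik argument (conjugating the shifts to modulations via $\mathcal{T}$, upgrading to $L^\infty(I)$-multiplier invariance, and the orthogonality computation showing $M_J\ominus M=\{0\}$), so it matches the approach of the paper's cited source; the only caveats are the routine measurability bookkeeping you already flag, and a notational clash between your $e_h(\omega)=e^{-2\pi i h\omega}$ and the paper's later convention $e_\gamma(\omega)=e^{2\pi i\gamma\omega}$.
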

	
	Rather than analyzing $V$ directly in  $L^2(\R^d)$, one may study its properties through its range function in $\ell^2(\La)$. The following lemma summarizes some classical results that illustrate the strength of this perspective. For detailed proofs, see  \cite{BDR2} and \cite{AC}.  
	
	\begin{lem}\label{range-properties}
		Let $V, U$ be shift-invariant spaces of $L^2(\R^d)$ with associated range functions $J_V, J_U$, respectively. Then:
		\begin{enumerate}
			\item[i)]\label{item 1 range-properties} The orthogonal complement  $V^{\perp}$ is shift-invariant and $J_{V^{\perp}}(\w)=\left(J_V(\w)\right)^{\perp}$ for a.e. $\w\in I $.
			\item[ii)]\label{item 2 range-properties} The space  $V\cap U$ is  shift-invariant and its range function satisfies $J_{V\cap\, U}(\omega) = J_V(\omega) \cap J_U(\omega),$
			for a.e. $\omega\in I$.
			\item[iii)]\label{item 3 range-properties} If $V$ and $U$ are orthogonal, then $V\oplus U$ is shift-invariant and its range function is given by $J_{V\oplus U}(\omega) = J_V(\omega) \oplus J_U(\omega),$ for a.e. $\omega\in I$.
			\item[iv)] The length of $V$ can be characterized as follows:
			$$\mathcal{L}(V) = \underset{\omega\in I}{\text{\rm ess sup }}  \dim J(\omega).$$
		\end{enumerate}
	\end{lem}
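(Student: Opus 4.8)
The plan is to transport each statement to the fiber side through the fiberization isometry $\mathcal{T}$ and then read it off from the uniqueness part of Theorem~\ref{thm:characterization-sis}: to determine the range function of a shift-invariant space $W$ it suffices to exhibit \emph{any} measurable range function whose associated space (via \eqref{eq: associated range function}) equals $W$. Two facts will be used throughout. First, $\mathcal{T}$ is unitary and intertwines each translation $T_h$, $h\in H$, with multiplication by the unimodular \emph{scalar} $\omega\mapsto e^{-2\pi i\langle h,\omega\rangle}$ on $L^2(I,\ell^2(\Lambda))$ --- a genuine scalar on each fiber precisely because $\langle h,\lambda\rangle\in\Z$ for $\lambda\in\Lambda$. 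Second, for $f,g\in L^2(\R^d)$ one has
\[
\langle T_h f,g\rangle \;=\; \int_I e^{-2\pi i\langle h,\omega\rangle}\,\langle\mathcal{T}f(\omega),\mathcal{T}g(\omega)\rangle_{\ell^2(\Lambda)}\,d\omega,\qquad h\in H,
\]
and $\{\,|I|^{-1/2}e^{2\pi i\langle h,\cdot\rangle}\,\}_{h\in H}$ is an orthonormal basis of $L^2(I)$; since $\omega\mapsto\langle\mathcal{T}f(\omega),\mathcal{T}g(\omega)\rangle$ lies in $L^1(I)$, it follows that $g\perp T_hf$ for all $h\in H$ if and only if $\langle\mathcal{T}f(\omega),\mathcal{T}g(\omega)\rangle=0$ a.e. Intersecting over a countable generating set of $V$ (whose fibers span $J_V(\omega)$ a.e.\ by Theorem~\ref{thm:characterization-sis}) gives the key equivalence $g\perp V\iff\mathcal{T}g(\omega)\perp J_V(\omega)$ a.e.

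For (i), $V^\perp$ is shift-invariant because each $T_h$ is unitary and $V$ is $H$-invariant; I would then take the measurable range function $\omega\mapsto(J_V(\omega))^\perp$ (measurable since $P_{(J_V(\omega))^\perp}=I-P_{J_V(\omega)}$), call its associated space $W$, and use the key equivalence to get $g\in W\iff\mathcal{T}g(\omega)\perp J_V(\omega)$ a.e.\ $\iff g\in V^\perp$, so $W=V^\perp$ and uniqueness yields $J_{V^\perp}=(J_V)^\perp$ a.e. Parts (ii) and (iii) follow the same template. For (ii): $V\cap U$ is clearly shift-invariant, and $\omega\mapsto J_V(\omega)\cap J_U(\omega)$ is a measurable range function because $P_{J_V(\omega)\cap J_U(\omega)}=\lim_n(P_{J_V(\omega)}P_{J_U(\omega)})^n$ strongly (von Neumann's formula), a pointwise limit of measurable fields; its associated space is visibly $V\cap U$. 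For (iii): by (i), $U\subseteq V^\perp$ forces $J_U(\omega)\subseteq(J_V(\omega))^\perp$ a.e., so $\omega\mapsto J_V(\omega)\oplus J_U(\omega)$ is a well-defined measurable range function ($P=P_{J_V}+P_{J_U}$); its associated space $W$ contains $V$ and $U$, hence $V\oplus U\subseteq W$, and conversely every $f\in W$ decomposes as $f=\mathcal{T}^{-1}\psi_1+\mathcal{T}^{-1}\psi_2$ with $\psi_1(\omega):=P_{J_V(\omega)}\mathcal{T}f(\omega)$ and $\psi_2(\omega):=P_{J_U(\omega)}\mathcal{T}f(\omega)$ --- these are measurable, bounded in norm by $\|\mathcal{T}f(\omega)\|$ hence in $L^2(I,\ell^2(\Lambda))$, and sum to $\mathcal{T}f$ by orthogonality --- so $f\in V\oplus U$.

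For (iv) (assuming $V$ finitely generated; otherwise both sides are $\infty$): the inequality $\esssup_{\omega}\dim J(\omega)\le\mathcal{L}(V)$ is immediate, since if $V=S(\varphi_1,\dots,\varphi_\ell)$ then $J(\omega)=\gencl{\mathcal{T}\varphi_i(\omega):i\le\ell}$ a.e.\ (translates of a generator contribute only unimodular scalar multiples of its fiber, which do not enlarge the span), so $\dim J(\omega)\le\ell$. For the reverse inequality, write $m:=\esssup_\omega\dim J(\omega)<\infty$, partition $I$ into the measurable sets $I_j=\{\omega:\dim J(\omega)=j\}$, $0\le j\le m$, and on each $I_j$ build a measurable field $\omega\mapsto(v_1(\omega),\dots,v_j(\omega))$ that is an orthonormal basis of $J(\omega)$ a.e.\ --- e.g.\ by running Gram--Schmidt on a fixed countable dense sequence of $\ell^2(\Lambda)$ and keeping the first $j$ nonzero normalized projections onto $J(\omega)$. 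Patching over $j$ and padding with zeros gives measurable $\psi_1,\dots,\psi_m\colon I\to\ell^2(\Lambda)$ with $\|\psi_i(\omega)\|\le 1$ (so $\psi_i\in L^2(I,\ell^2(\Lambda))$) and $\gencl{\psi_i(\omega):i}=J(\omega)$ a.e.; then $\varphi_i:=\mathcal{T}^{-1}\psi_i\in V$, the space $S(\varphi_1,\dots,\varphi_m)$ has range function $J$ and hence equals $V$, so $\mathcal{L}(V)\le m$.

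The genuinely delicate point is this last step of (iv): extracting a \emph{measurable} orthonormal basis of $J(\omega)$ from mere measurability of $\omega\mapsto P_{J(\omega)}$. That is the place where the measurability hypothesis on the range function is really exploited, and the step I would expect to demand the most care (it is the standard point at which one appeals, explicitly or implicitly, to a measurable-selection theorem). Everything else --- shift-invariance of $V^\perp$, $V\cap U$ and $V\oplus U$, and the measurability of the derived projection fields in (i)--(iii) --- is routine bookkeeping once the two facts above are in hand.
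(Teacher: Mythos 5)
The paper itself offers no proof of this lemma: it is recorded as a summary of classical facts, with the reader referred to \cite{BDR2} and \cite{AC}. Your argument is correct and is essentially the standard fiberization proof one finds in those sources and in Bownik's and Helson's treatments: the key equivalence $g\perp V\iff \mathcal T g(\omega)\perp J_V(\omega)$ a.e.\ (via the Fourier coefficients of the $L^1(I)$ bracket function), the uniqueness clause of Theorem~\ref{thm:characterization-sis} to identify range functions, and, for item (iv), a measurable Gram--Schmidt selection of orthonormal bases of the fibers. You correctly isolate the one genuinely delicate step, the measurable orthonormal selection in (iv); stated precisely, one keeps at stage $n$ the normalized Gram--Schmidt \emph{residual} of $P_{J(\omega)}u_n$ against the previously retained vectors whenever that residual is nonzero (the sets on which a given keep/discard pattern occurs being measurable), rather than merely the nonzero projections themselves, but this is exactly what your appeal to Gram--Schmidt amounts to. No gaps.
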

	
	The last item points out a key property of shift-invariant spaces. If the space $V$ is finitely generated, then the associated fiber spaces $J(\w)$ are finite-dimensional for a.e. $\w\in I$. 
	
	Let us denote the {\it spectrum} of $V$ as $$ \sigma(V) := \left\{\omega \in I\,:\,  J(\omega) \neq\{0\} \right\}.$$
	The next  lemma follows from \cite[Proposition 2.9 and Lemma 3.6]{BDR2}.  
	
	\begin{lem}\label{spec principal}
		Let $V$ be a shift-invariant space of $L^2(\R^d)$. Then, there exists $\varphi\in V$ such that $\supp \|\mathcal T \varphi(\cdot)\|_{\ell^2}=\sigma(V).$
	\end{lem}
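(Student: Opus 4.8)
The plan is to produce a single function $\varphi\in V$ by ``gluing'' countably many generators of $V$ across a measurable partition of the spectrum, using that $V$ is stable under multiplication on the Fourier side by bounded $\La$-periodic functions.

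Since $L^2(\R^d)$ is separable, so is $V$, and for any countable dense subset $\Phi=\{f_n\}_{n\in\N}$ of $V$ one has $V=S(\Phi)$. By Theorem \ref{thm:characterization-sis}, $J(\omega)=\overline{\text{span}}\{\mathcal T f_n(\omega):n\in\N\}$ for a.e.\ $\omega\in I$; hence, setting $A_n:=\{\omega\in I:\mathcal T f_n(\omega)\neq 0\}$ (a measurable set, as $\omega\mapsto\|\mathcal T f_n(\omega)\|_{\ell^2}$ is measurable), we get $J(\omega)=\{0\}$ if and only if $\omega\notin\bigcup_{n}A_n$, so that $\sigma(V)=\bigcup_{n\in\N}A_n$ up to a null set. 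Now disjointify: put $B_1:=A_1$ and $B_n:=A_n\setminus(A_1\cup\dots\cup A_{n-1})$ for $n\geq2$, so the $B_n$ are pairwise disjoint, measurable, and $\bigcup_n B_n=\sigma(V)$ a.e. Let $\widetilde{\chi_{B_n}}$ denote the $\La$-periodic extension to $\R^d$ of the indicator of $B_n\subset I$, choose scalars $c_n>0$ with $\sum_n c_n\|f_n\|_{L^2}<\infty$, and define $\varphi$ by $\widehat\varphi:=\sum_{n\in\N}c_n\,\widetilde{\chi_{B_n}}\,\widehat{f_n}$. The series converges in $L^2$ since $\|\widetilde{\chi_{B_n}}\widehat{f_n}\|_{L^2}\leq\|f_n\|_{L^2}$, and each summand is the Fourier transform of an element of $V$: the corresponding fiber at a.e.\ $\omega$ is $\chi_{B_n}(\omega)\,\mathcal T f_n(\omega)$, which lies in the subspace $J(\omega)$, so Theorem \ref{thm:characterization-sis} applies; since $V$ is closed, $\varphi\in V$.

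It remains to identify the support of the fiber norm. For a.e.\ $\omega$, $\mathcal T\varphi(\omega)=\sum_n c_n\chi_{B_n}(\omega)\,\mathcal T f_n(\omega)$; because the $B_n$ are disjoint and cover $\sigma(V)$, this sum has at most one nonzero term: it equals $c_n\,\mathcal T f_n(\omega)$ when $\omega\in B_n$ and vanishes when $\omega\notin\sigma(V)$. As $\mathcal T f_n(\omega)\neq0$ for $\omega\in B_n\subseteq A_n$, we conclude $\|\mathcal T\varphi(\omega)\|_{\ell^2}>0$ exactly on $\sigma(V)$, i.e.\ $\supp\|\mathcal T\varphi(\cdot)\|_{\ell^2}=\sigma(V)$. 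The only real obstacle in this argument is cancellation: a naive convergent combination $\sum_n c_n f_n$ need not have its fiber nonzero on all of $\sigma(V)$, since the vectors $\mathcal T f_n(\omega)$ could cancel; disjointifying the $A_n$ and inserting the $\La$-periodic cut-offs $\widetilde{\chi_{B_n}}$ (which keep us inside $V$, $V$ being invariant under multiplication by bounded $\La$-periodic symbols) removes this difficulty by reducing the fiber of $\varphi$ to a single nonzero scalar multiple of some $\mathcal T f_n(\omega)$ at each point of the spectrum. Besides this, one only needs the routine measurability checks above and the convention that $\supp$ denotes the essential support, i.e.\ the a.e.\ class of $\{\,\|\mathcal T\varphi(\cdot)\|_{\ell^2}\neq0\,\}$, under which the stated equality is precisely what the computation gives.
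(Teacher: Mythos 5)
Your construction is correct. Note, however, that the paper does not actually prove this lemma: it simply records it as a consequence of \cite[Proposition 2.9 and Lemma 3.6]{BDR2}, so there is no in-text argument to compare against. What you have written is a sound, self-contained proof along the standard lines underlying those cited results: take a countable dense family $\{f_n\}$ generating $V$, observe that $\sigma(V)=\bigcup_n A_n$ up to a null set, disjointify into the $B_n$, and glue the pieces via the $\La$-periodic cut-offs $\widetilde{\chi_{B_n}}$. The two key points you identify are exactly the right ones: the cut-offs keep each summand inside $V$ (since multiplying $\widehat{f}$ by a bounded $\La$-periodic symbol multiplies the fiber $\mathcal T f(\omega)$ by the scalar value at $\omega$, hence preserves membership of the fiber in $J(\omega)$), and the disjointness of the $B_n$ eliminates any possible cancellation, so that $\mathcal T\varphi(\omega)$ reduces to a single nonzero multiple of $\mathcal T f_n(\omega)$ on each $B_n$. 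The only step you pass over quickly is the identification of $\mathcal T\varphi(\omega)$ with the pointwise sum $\sum_n c_n\chi_{B_n}(\omega)\mathcal T f_n(\omega)$: this needs a word, but it is immediate here because $\mathcal T$ is an isometric isomorphism and the summands have pairwise disjoint supports in $\omega$, so the partial sums stabilize pointwise and their $L^2(I,\ell^2(\La))$-limit agrees a.e.\ with the pointwise limit. With the stated convention that $\supp$ means essential support (which is forced anyway, since $J$ and hence $\sigma(V)$ are only determined up to null sets), your argument gives precisely the statement of the lemma.
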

	
	Finally, since the dimension of $J(\w)$ may vary depending on $\w\in I$, it will be  convenient to split $I$ into measurable sets where the dimension of $J(\w)$ is constant. The lemma below is an immediate consequence of the measurability of the function $\w\mapsto\tr(J(\w))$, where $\tr$ denotes the trace mapping.
	
	\begin{lem}\label{lem:dimensions}
		Let $V$ be a shift-invariant space of length $\ele(V) = \ell$ and with corresponding range function $J$. Then, the sets
		$$
		A_n := \{\w\in  I\,:\, \dim J(\w) = n\}, \quad n=0, \dots, \ell,
		$$
		are measurable and satisfy 
		$$
		I = \bigcup_{n=0}^{\ell} A_n.
		$$
	\end{lem}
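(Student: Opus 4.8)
The plan is to reduce the whole statement to the measurability of a single scalar function on $I$, and then read off the measurability of each $A_n$ and the covering identity as immediate consequences. First I would observe that for a.e.\ $\w\in I$ the fiber space $J(\w)$ is a finite-dimensional subspace of $\ell^2(\La)$ of dimension at most $\ele(V)=\ell$ (this is item iv) of Lemma~\ref{range-properties}), so that the orthogonal projection $P_{J(\w)}$ onto $J(\w)$ is a trace-class operator with $\tr P_{J(\w)} = \dim J(\w) \in \{0,1,\dots,\ell\}$ for a.e.\ $\w$. Thus it suffices to show that the function $\w\mapsto \tr P_{J(\w)}$ is measurable, and then set $A_n := \{\w\in I : \tr P_{J(\w)} = n\}$, which is the preimage of the point $\{n\}$ under a measurable function and hence measurable. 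Since $\dim J(\w)\in\{0,\dots,\ell\}$ for a.e.\ $\w$, the sets $A_0,\dots,A_\ell$ cover $I$ up to a null set; adjoining this null set to $A_0$ (say) gives the exact identity $I=\bigcup_{n=0}^\ell A_n$ with the $A_n$ still measurable.

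The key step is therefore the measurability of $\w\mapsto\tr P_{J(\w)}$. Here I would use the definition of measurability of the range function $J$ (Def.~\ref{range-function}): for every $u,v\in\ell^2(\La)$ the function $\w\mapsto\langle P_{J(\w)}u,v\rangle$ is measurable. Fixing the standard orthonormal basis $\{\delta_\la\}_{\la\in\La}$ of $\ell^2(\La)$, each function $\w\mapsto\langle P_{J(\w)}\delta_\la,\delta_\la\rangle$ is measurable, and since $P_{J(\w)}$ is positive and trace-class for a.e.\ $\w$ we have
\[
\tr P_{J(\w)} = \sum_{\la\in\La} \langle P_{J(\w)}\delta_\la,\delta_\la\rangle \qquad\text{for a.e. }\w\in I.
\]
The right-hand side is a countable sum of nonnegative measurable functions, hence measurable; this yields the measurability of $\w\mapsto\tr P_{J(\w)}$, which is the content needed.

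The only mildly delicate point — which I would treat as the main (though still minor) obstacle — is making sure that $P_{J(\w)}$ really is trace-class for a.e.\ $\w$ so that the interchange of trace and series is legitimate pointwise, rather than merely having a divergent series of nonnegative terms. This is precisely guaranteed by item iv) of Lemma~\ref{range-properties}, which gives $\dim J(\w)\le\ell<\infty$ for a.e.\ $\w$; on the exceptional null set the value of $\dim J(\w)$ is irrelevant for the conclusion, since modifying a measurable function on a null set preserves measurability and we may absorb that null set into $A_0$. With this in hand, the lemma follows directly, as claimed in the text (``an immediate consequence of the measurability of the function $\w\mapsto\tr(J(\w))$''). \qed
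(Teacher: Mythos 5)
Your proposal is correct and follows exactly the route the paper intends: the paper gives no written proof beyond asserting that the lemma is ``an immediate consequence of the measurability of $\w\mapsto\tr(P_{J(\w)})$,'' and you supply precisely the missing details (expanding the trace over an orthonormal basis of $\ell^2(\La)$, invoking Definition~\ref{range-function}, and taking preimages of $\{n\}$). The only cosmetic liberty is absorbing the null set where $\dim J(\w)>\ell$ into $A_0$, which is harmless under the paper's convention of identifying range functions a.e.
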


	\section{Canonical forms for shift-preserving operators}\label{sec-3}
	
	The fiberization mapping not only provides insight into the structure of shift-invariant spaces but also serves as a powerful tool to study the operators acting on them. In particular, shift-preserving operators, those that commute with shifts along a lattice, play a fundamental role in this context. Through the lens of fiberization, such operators can be understood in terms of range operators, acting pointwise on the fibers. When the shift-invariant space is finitely generated, this perspective enables the use of finite-dimensional linear algebra techniques to achieve a deeper spectral analysis and 
	to construct canonical forms such as triangular and diagonal representations.
	
	In this section, we focus on the first problem mentioned in the introduction (P1). We prove that any shift-preserving operator acting on a finitely generated shift-invariant space admits a triangular decomposition. When the operator is normal, we deduce a diagonal decomposition. These results ultimately lead to an alternative proof of the spectral decomposition theorem from \cite{ACCP} and extend the decomposition theorem \cite[Theorem 3.3]{B}.

	\subsection{Shift-preserving operators}\label{section-SP}
	
	In this section, we recall some definitions and properties of shift-preserving operators.
	
	\begin{fed}
		Let $V\subset L^2(\mathbb R^d)$ be an $H$-invariant space and $L:V\rightarrow L^2(\mathbb R^d)$ be a bounded operator. We say that $L$ is {\it shift-preserving} or $H$-preserving if it commutes with translations, i.e. $$LT_h = T_h L,\quad\text{ for all }h\in H.$$
	\end{fed}
	
	In \cite{B}, Bownik analyzed the properties of these operators by  introducing the concept of a  \textit{range operator}, which provides a way to understand the behavior of a shift-preserving operator via the fiberization mapping.
	
	\begin{fed}\label{range operator}
		Given two measurable range functions $J,J'$, a range operator $R: J\to J'$ is a choice of linear operators $R(\w):J(\w) \to J'(\w)$, $\w\in  I$.
		We say that $R$ is {\it measurable} if  $\omega \mapsto \langle R(\omega) P_{J(\omega)} u,v\rangle$ is a measurable scalar function for every $u,v\in\ell^2(\La)$.
	\end{fed}
	
	This notion leads to the following fundamental correspondence.
	
	\begin{teo}[{\cite[Theorem 4.5]{B}}] Let $V, V' \subseteq L^2(\R^d)$ be two shift-invariant subspaces with range functions $J$ and $J'$, respectively.
		Given a bounded, shift-preserving operator $L:V\rightarrow V'$, there exists a measurable range operator $R: J \to J'$ such that 
		\begin{equation}\label{prop of R}
			(\mathcal T\circ L) f(\omega) = R(\omega) \left(\mathcal T f(\omega)\right),
		\end{equation}
		for a.e. $\omega\in I$ and $f\in V$.
		
		Conversely, given a measurable range operator $R: J \to J'$ with $$\underset{\omega\in  I}{\esssup} \|R(\omega)\|<\infty,$$ there exists a bounded shift-preserving operator $L: V\to V'$ such that  \eqref{prop of R} holds. The correspondence between $L$ and $R$ is one-to-one under the convention that the range operators are identified if they are equal a.e. $\w\in I$. 
	\end{teo}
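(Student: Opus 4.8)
The plan is to transport the problem through the fiberization isometry $\mathcal T$ and exploit the fact that, on the fiber side, translations become multiplications. A direct computation with the Fourier transform shows that for $h\in H$ one has $\mathcal T(T_hf)(\omega)=e^{-2\pi i\langle h,\omega\rangle}\,\mathcal Tf(\omega)$, because $\langle h,\la\rangle\in\Z$ for every $\la\in\La$. Writing $M_\varphi$ for pointwise multiplication by a scalar function $\varphi$ on $L^2(I,\ell^2(\La))$ and $M'_\varphi:=\mathcal T^{-1}M_\varphi\mathcal T$, this says $T_h=M'_{e_{-h}}$ with $e_{-h}(\omega)=e^{-2\pi i\langle h,\omega\rangle}$. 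Since each fiber space is a linear subspace, Theorem \ref{thm:characterization-sis} guarantees that $M'_\varphi$ maps $V$ into $V$ and $V'$ into $V'$ for every $\varphi\in L^\infty(I)$; moreover every characteristic function $\chi_E$, $E\subseteq I$ measurable, is the a.e.\ limit of a uniformly bounded sequence of trigonometric polynomials in the $e_{-h}$, so a bounded operator commuting with all $T_h$ automatically commutes with every $M'_{\chi_E}$ (pass to strong limits via dominated convergence). This commutation is the only genuinely analytic ingredient; the rest is organized around it.

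Given $L$, I would build $R$ fiberwise. Choose a countable set $\{g_k\}\subset V$, closed under rational linear combinations, whose closed $H$-invariant span is $V$; then $\overline{\mathrm{span}}\{\mathcal Tg_k(\omega)\}=J(\omega)$ for a.e.\ $\omega$ by Theorem \ref{thm:characterization-sis}. For each pair $(k,l)$, applying $LM'_{\chi_E}=M'_{\chi_E}L$ with $E=\{\omega:\mathcal Tg_k(\omega)=\mathcal Tg_l(\omega)\}$ gives $\mathcal T(Lg_k)=\mathcal T(Lg_l)$ a.e.\ on $E$; removing the countable union of the exceptional null sets, the rule $\mathcal Tg_k(\omega)\mapsto\mathcal T(Lg_k)(\omega)$ becomes a well-defined $\Q$-linear map on a dense subset of $J(\omega)$ for a.e.\ $\omega$. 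Testing the inequality $\|Lg\|\le\|L\|\,\|g\|$ against $M'_{\chi_E}$ for all measurable $E$ yields the pointwise bound $\|\mathcal T(Lg_k)(\omega)\|\le\|L\|\,\|\mathcal Tg_k(\omega)\|$ a.e., so this map is bounded by $\|L\|$ and extends by continuity to an operator $R(\omega):J(\omega)\to J'(\omega)$, which I set to $0$ on the remaining null set. Measurability of $\omega\mapsto\langle R(\omega)P_{J(\omega)}u,v\rangle$ then follows from that of $\omega\mapsto\langle\mathcal T(Lg_k)(\omega),v\rangle$ together with a measurable approximation of $P_{J(\omega)}u$ by the vectors $\mathcal Tg_k(\omega)$, and the identity $(\mathcal T\circ L)f(\omega)=R(\omega)\mathcal Tf(\omega)$ propagates from the $g_k$ to all $f\in V$ by choosing $g_k\to f$, extracting an a.e.\ convergent subsequence of fibers, and using the uniform bound on $\|R(\omega)\|$.

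For the converse, given a measurable $R:J\to J'$ with $C:=\esssup_\omega\|R(\omega)\|<\infty$, define $Lf:=\mathcal T^{-1}\!\big(R(\cdot)\,\mathcal Tf(\cdot)\big)$. Measurability of $R$ and of the fiber map make $\omega\mapsto R(\omega)\mathcal Tf(\omega)$ measurable; the pointwise estimate $\|R(\omega)\mathcal Tf(\omega)\|\le C\|\mathcal Tf(\omega)\|$ puts it in $L^2(I,\ell^2(\La))$ with $\|Lf\|\le C\|f\|$; and $R(\omega)\mathcal Tf(\omega)\in J'(\omega)$ forces $Lf\in V'$. Shift-preservation is immediate: $\mathcal T(LT_hf)(\omega)=R(\omega)\big(e^{-2\pi i\langle h,\omega\rangle}\mathcal Tf(\omega)\big)=e^{-2\pi i\langle h,\omega\rangle}R(\omega)\mathcal Tf(\omega)=\mathcal T(T_hLf)(\omega)$. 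The correspondence is one-to-one because both directions are governed by the formula $(\mathcal T\circ L)f=R(\cdot)\mathcal Tf(\cdot)$, and two range operators agreeing on the dense sets $\{\mathcal Tg_k(\omega)\}\subseteq J(\omega)$ for a.e.\ $\omega$ must coincide a.e.

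I expect the principal obstacle to be the measurability of the range operator produced in the forward direction: one must reconcile, in a jointly measurable fashion, the family of projections $P_{J(\omega)}$ with the countable coordinate functions $\omega\mapsto\mathcal Tg_k(\omega)$, which is a measurable-selection / direct-integral argument rather than a formal one. By contrast, the well-definedness and boundedness of each $R(\omega)$ are entirely consequences of the commutation $LM'_{\chi_E}=M'_{\chi_E}L$, so the only real subtlety there is the preliminary reduction establishing that commutation.
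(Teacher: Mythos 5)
This theorem is quoted in the paper from \cite[Theorem 4.5]{B} without proof, so there is no in-paper argument to compare against; what you have written is essentially a reconstruction of the standard Helson--Bownik proof, and it is sound in outline. The two load-bearing points are exactly the ones you identify: (1) the upgrade from commutation with the translations $T_h$ to commutation with all $M'_{\chi_E}$ (your Fej\'er-mean/dominated-convergence argument works, since the characters $e_{-h}$, $h\in H$, are an orthogonal basis of $L^2(I)$ and the Ces\`aro means of $\chi_E$ are uniformly bounded by $1$), from which both the well-definedness and the pointwise bound $\|\mathcal T(Lg)(\w)\|\le \|L\|\,\|\mathcal T g(\w)\|$ follow; and (2) the measurability of $\w\mapsto\langle R(\w)P_{J(\w)}u,v\rangle$, which needs the measurable selection $k_n(\w):=\min\{k:\|\mathcal T g_k(\w)-P_{J(\w)}u\|<1/n\}$ --- routine but not automatic, and you correctly flag it. One small precision: for the final propagation of \eqref{prop of R} from the $g_k$ to all $f\in V$ you need your countable family to be norm-dense in $V$, not merely to generate $V$ as an $H$-invariant space; either enlarge $\{g_k\}$ to include all translates $T_h g_k$ (their fibers are scalar multiples of $\mathcal T g_k(\w)$, so nothing else changes) or first verify the identity on $T_hg_k$ via $\mathcal T(LT_hg_k)(\w)=e^{-2\pi i\langle h,\w\rangle}R(\w)\mathcal Tg_k(\w)=R(\w)\mathcal T(T_hg_k)(\w)$. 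The converse direction and the uniqueness statement are handled correctly.
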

	
	As anticipated, the global properties of shift-preserving operators are reflected by the pointwise properties of the corresponding range operators. The following theorem summarizes this relationship. See \cite{B} and \cite{BI2019} (from the multiplication-invariant operators' perspective) for the detailed proofs. 
	
	\begin{teo}\label{thm:pointwise}
		Let $V,V'\subset L^2(\R^d)$ be two shift-invariant spaces with range functions $J_V$ and $J_{V'}$, respectively. Let $L:V\rightarrow V'$ be a shift-preserving operator with corresponding range operator $R: J_V\rightarrow J_{V'}$. Then the following are true:
		\begin{enumerate}
			\item[i)]\label{op-norm} $\|L\|_{\text{op}}=\esssup_{\w\in I} \|R(\w)\|_{\text{op}}$. 
			\item[ii)] The adjoint $L^*:V'\rightarrow V$ is also shift-preserving with corresponding range operator $R^*:J_{V'}\rightarrow J_V$ given by $R^*(\w)=(R(\w))^*$ for a.e. $\w\in I$.
			\item[iii)] $L$ is normal (self-adjoint) if and only if $R(\w)$ is normal (self-adjoint) for a.e. $\w\in I$.
			\item[iv)] $L$ is injective if and only if $R(\w)$ is injective for a.e. $\w\in  I$.
			\item[v)]\label{partial-isom} $L$ is a (partial) isometry if and only if $R(\w)$ is a (partial) isometry for a.e. $\w\in I$.
			\item[vi)]\label{rank} The space $V''=\overline{L(V)}\subseteq L^2(\R^d)$ is shift-invariant and its range function is given by $$J_{V''}(\w)=\overline{R(\w) J_V(\w)},$$
			for a.e. $\w\in I$.
			\item[vii)]\label{ker} The space $\ker(L)$ is shift-invariant and its range function is given by $K(\w)=\ker(R(\w))$ for a.e. $\w\in I$.
		\end{enumerate}
	\end{teo}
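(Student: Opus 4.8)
The whole argument lives on the fiberization side. Since $\mathcal{T}$ is an isometric isomorphism of $L^2(\R^d)$ onto $L^2(I,\ell^2(\La))$ which, by Theorem \ref{thm:characterization-sis}, carries $V$ onto $\{\psi:\psi(\w)\in J_V(\w)\text{ a.e.}\}$ and $V'$ onto the analogous space for $J_{V'}$, and since $L$ is transported to pointwise application of $R(\w)$ via $(\mathcal{T}\circ L)f(\w)=R(\w)\mathcal{T}f(\w)$, the plan is to first record two elementary facts and then derive each item. Fact (a): the range operator of a composition $L_2L_1$ of shift-preserving operators is $\w\mapsto R_2(\w)R_1(\w)$ --- apply the defining relation twice and use the uniqueness in Bownik's correspondence. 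Fact (b): since $\mathcal{T}$ is an isometry, $R_1=R_2$ a.e.\ if and only if $L_1=L_2$, and in particular $R(\w)=0$ a.e.\ iff $L=0$. Together these reduce operator identities in $L$ to pointwise identities in $R(\w)$.

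For item (i), the bound $\|L\|\le\esssup_{\w\in I}\|R(\w)\|$ is the one-line estimate $\|Lf\|^2=\int_I\|R(\w)\mathcal{T}f(\w)\|^2\,d\w\le\bigl(\esssup_{\w\in I}\|R(\w)\|\bigr)^2\|f\|^2$. For the reverse, if $\esssup_{\w\in I}\|R(\w)\|>M$ then $E:=\{\w:\|R(\w)\|>M\}$ has positive measure; a measurable selection produces unit vectors $u(\w)\in J_V(\w)$ with $\|R(\w)u(\w)\|>M$ on a subset $E_0\subset E$ of positive finite measure, and $\psi:=|E_0|^{-1/2}\chi_{E_0}u$ satisfies $\|L\mathcal{T}^{-1}\psi\|>M\|\mathcal{T}^{-1}\psi\|$. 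For item (ii), testing against $f\in V$, $g\in V'$ gives $\langle Lf,g\rangle=\int_I\langle R(\w)\mathcal{T}f(\w),\mathcal{T}g(\w)\rangle\,d\w=\int_I\langle\mathcal{T}f(\w),R(\w)^*\mathcal{T}g(\w)\rangle\,d\w$, so the fiber of $L^*g$ is $R(\w)^*\mathcal{T}g(\w)$; since $\|R(\w)^*\|=\|R(\w)\|$ is essentially bounded and $R(\w)^*$ maps $J_{V'}(\w)$ into $J_V(\w)$, the converse direction of Bownik's correspondence together with Fact (b) identifies the operator attached to this range operator with $L^*$. Items (iii)--(v) are then formal: $L$ normal $\iff L^*L=LL^*\iff R(\w)^*R(\w)=R(\w)R(\w)^*$ a.e.\ (Facts (a), (b) and (ii)) $\iff R(\w)$ normal a.e.; self-adjointness is $L=L^*$; and $L$ is a partial isometry iff $L^*L$ is an orthogonal projection, while an operator on a shift-invariant space is an orthogonal projection iff its range operator is one a.e.\ (self-adjoint plus idempotent, again via (ii), (a), (b)).

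Items (vi) and (vii) use the generating-set half of Theorem \ref{thm:characterization-sis}. Fix a countable $\Phi\subset V$ with $V=S(\Phi)$, so that $\overline{\text{span}}\{\mathcal{T}\varphi(\w):\varphi\in\Phi\}=J_V(\w)$ a.e. Boundedness of $L$ and $LT_h=T_hL$ give $\overline{L(V)}=S(\{L\varphi:\varphi\in\Phi\})$, whence $J_{V''}(\w)=\overline{\text{span}}\{R(\w)\mathcal{T}\varphi(\w):\varphi\in\Phi\}=\overline{R(\w)J_V(\w)}$ a.e., using continuity of $R(\w)$ and countability of $\Phi$ to pass the closure and the span through $R(\w)$. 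For (vii), $Lf=0\iff R(\w)\mathcal{T}f(\w)=0$ a.e., so $\ker L=\{f\in V:\mathcal{T}f(\w)\in\ker R(\w)\text{ a.e.}\}$; this set is closed and shift-invariant (if $Lf=0$ then $LT_hf=T_hLf=0$), hence has a range function $K$, and the uniqueness in Helson's theorem forces $K(\w)=\ker R(\w)$ a.e.\ --- measurability of $\w\mapsto\ker R(\w)$ following from $\ker R(\w)=J_V(\w)\ominus\overline{R(\w)^*\,J_{V'}(\w)}$ and (vi) applied to $L^*$. Finally (iv) drops out of (vii): $L$ injective $\iff\ker L=\{0\}\iff\ker R(\w)=\{0\}$ a.e.\ $\iff R(\w)$ injective a.e.

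The step I expect to be the main obstacle is the measurable selection in the lower bound of (i): one must choose, measurably in $\w$, unit vectors in $J_V(\w)$ nearly realizing $\|R(\w)\|$, which requires a von Neumann--type selection theorem or an approximation through a countable dense subset of $\ell^2(\La)$, with care that the selected field actually lands in the fibers $J_V(\w)$. Organizing the proof so that (iii)--(v) and (iv) are consequences of (i), (ii), (vi), (vii) via Facts (a) and (b) confines this technicality to a single place; the rest is bookkeeping with the isometry $\mathcal{T}$ and the two correspondence theorems.
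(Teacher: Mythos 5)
The paper itself gives no proof of this theorem: it states it as a summary of known results and refers the reader to \cite{B} and \cite{BI2019}. Your argument is correct and is essentially a self-contained reconstruction of the standard fiberization proofs from those references: the upper bound and the pointwise transfer of algebraic identities via your Facts (a) and (b) are exactly right, the identification of the adjoint's range operator with $R(\w)^*$ goes through (one only needs to check, routinely, that $\w\mapsto R(\w)^*$ is a measurable range operator before invoking the converse direction of Bownik's correspondence, or alternatively observe directly that $L^*$ commutes with the $T_h$), and items (vi) and (vii) are handled correctly through a countable generating set and the uniqueness in Theorem \ref{thm:characterization-sis}. You also correctly isolate the one genuinely delicate step, the measurable selection of near-maximizing unit vectors in the lower bound of (i); the fix you indicate works: enumerate a countable dense subset $\{d_n\}$ of $\ell^2(\La)$, note that $\{P_{J_V(\w)}d_n\}_n$ is dense in $J_V(\w)$ so that $\|R(\w)\|$ is a countable supremum of measurable functions, and select the first index $n(\w)$ for which $\|R(\w)P_{J_V(\w)}d_n\|>M\|P_{J_V(\w)}d_n\|$ on a measurable partition of $E$. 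With that detail filled in, the proof is complete.
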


	\subsection{Triangular form for shift-preserving operators}
	
	We henceforward focus on the shift-preserving operators that act on shift-invariant spaces of finite length, i.e., $L:V\to V$ with $\ele(V) =\ell < \infty$. Then, for almost every $\w$, the fiber space $J(\w)$ is a finite-dimensional subspace of $\ell^2(\La)$, and the operator $R(\w)$ acts on a finite-dimensional space.
	By working within the fiber domain, we establish a canonical triangular form that mirrors the classical Schur decomposition. This decomposition provides a foundational step toward understanding the spectral structure of such operators.
	
	The central idea is that measurable selections of eigenvalues of the range operator yield shift-invariant subspaces that are invariant under the operator.
	For this purpose, we first state and prove the following proposition.
	
	\begin{pro}\label{An eigenvalue for R}
		Let $V$ be a finitely generated shift-invariant space and let $L:V\rightarrow V$ be a shift-preserving operator with associated range operator $R$. Then, there exists a measurable function $\la: I\to \C$ such that $\la(\w)$ is an eigenvalue of $R(\w)$ for a.e. $\w\in I$. Moreover, there exists a function $\psi_\la \in V$ such that $\supp \| \mathcal T \psi_\la(\w)\| = \sigma(V)$ and
		\begin{equation}\label{eq:eigenfunction}
			R(\w)\mathcal T \psi_\la(\w)=\la(\w) \mathcal T \psi_\la(\w), \quad \text{for a.e.} \, \w \in I.
		\end{equation}	
	\end{pro}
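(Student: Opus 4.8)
The plan is to produce the eigenvalue function $\lambda$ fiberwise, using a measurable selection argument, and then to build the eigenfunction $\psi_\lambda$ by another measurable selection of the associated eigenvectors, normalized so that its fiber norm is supported exactly on $\sigma(V)$. First, I would fix the decomposition $I = \bigcup_{n=0}^\ell A_n$ from Lemma~\ref{lem:dimensions}, where $\dim J(\w)=n$ on $A_n$, and work separately on each $A_n$. On $A_0$ the fiber space is trivial and there is nothing to do (set $\lambda=0$ and $\mathcal{T}\psi_\lambda(\w)=0$ there). On each $A_n$ with $n\ge 1$, for a.e.\ $\w$ the operator $R(\w)$ is a linear map on the $n$-dimensional space $J(\w)$, so it has at least one eigenvalue; the characteristic polynomial $p_\w(z)=\det(zI - R(\w)|_{J(\w)})$ has coefficients that depend measurably on $\w$ (they are polynomial expressions in the entries $\langle R(\w)P_{J(\w)}u,v\rangle$, which are measurable by the definition of a measurable range operator, together with the entries of $P_{J(\w)}$, which are measurable by the definition of a measurable range function). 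I would then invoke a standard measurable-root-selection theorem: the roots of a monic polynomial whose coefficients vary measurably can be chosen to vary measurably (e.g.\ order the roots lexicographically by real part, then imaginary part). This yields a measurable $\lambda:I\to\C$ with $\lambda(\w)\in\spec{R(\w)}$ a.e.

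The second step is to select eigenvectors measurably. For a.e.\ $\w$, the space $\ker(R(\w)-\lambda(\w))\cap J(\w)$ is nonzero and finite-dimensional; I would express it as the kernel of the matrix $(R(\w)-\lambda(\w))P_{J(\w)}$ acting on $\ell^2(\La)$ restricted to the finitely many relevant coordinates, and use a measurable selection (e.g.\ the first nonzero column of a suitable adjugate-type construction, or Gram--Schmidt applied to a measurable basis of the kernel) to obtain a measurable $\w\mapsto v(\w)\in\ell^2(\La)$ with $v(\w)\in\ker(R(\w)-\lambda(\w))$ and $\|v(\w)\|=1$ for a.e.\ $\w\in\sigma(V)$, and $v(\w)=0$ off $\sigma(V)$. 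The measurability of $v$ together with $\|v(\w)\|\le 1$ guarantees that $\w\mapsto v(\w)$ defines an element of $L^2(I,\ell^2(\La))$, which via the inverse fiberization $\mathcal{T}^{-1}$ produces a function $\psi_\lambda\in L^2(\R^d)$. Since $v(\w)\in J(\w)$ a.e., Theorem~\ref{thm:characterization-sis} gives $\psi_\lambda\in V$, and by construction $\supp\|\mathcal{T}\psi_\lambda(\cdot)\|_{\ell^2}=\sigma(V)$ and $R(\w)\mathcal{T}\psi_\lambda(\w)=\lambda(\w)\mathcal{T}\psi_\lambda(\w)$ a.e., which is \eqref{eq:eigenfunction}.

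The main obstacle is the measurable selection itself, carried out uniformly across the sets $A_n$: one must be careful that the eigenvalue chosen on $A_n$ and the eigenvector extracted from $\ker(R(\w)-\lambda(\w))$ genuinely depend measurably on $\w$, not merely pointwise. The cleanest route is to reduce everything, on each $A_n$, to matrices over a fixed $n\times n$ block after choosing a measurable orthonormal frame for $J(\w)$ (which exists because $J$ is a measurable range function of constant dimension $n$ on $A_n$ — this is where Lemma~\ref{range-properties}(iv) and the standard measurable-frame construction for measurable range functions enter), and then apply the finite-dimensional measurable-root and measurable-kernel lemmas to that block. A subtle point worth stating is that the chosen $\lambda$ need not be, say, the eigenvalue of largest modulus or have constant multiplicity; we only need \emph{some} measurable eigenvalue selection, and the lexicographic ordering of the roots of $p_\w$ suffices. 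Finally, one should note that the bound $\|v(\w)\|=1$ on $\sigma(V)$ makes $\psi_\lambda$ automatically square-integrable, so no extra normalization or truncation is required.
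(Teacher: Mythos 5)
Your argument is correct and follows the same overall strategy as the paper --- partition $I$ into the sets $A_n$ of constant fiber dimension (Lemma \ref{lem:dimensions}), select a measurable eigenvalue on each piece, glue, and then produce a generator whose fiber support is all of $\sigma(V)$ --- but it replaces the paper's two black-box ingredients with explicit constructions. For the eigenvalue, the paper simply cites Azoff's measurable selection theorem (\cite[Corollary 4]{A}), whereas you re-derive it via the characteristic polynomial with measurably varying coefficients and a lexicographic root ordering; this is essentially the content of Azoff's result, and you correctly flag that one must first fix a measurable orthonormal frame for $J(\w)$ on each $A_n$ to make sense of the matrix of $R(\w)|_{J(\w)}$. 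For the eigenfunction, the paper avoids any explicit eigenvector selection: it declares $J_\la(\w):=\ker\bigl(R(\w)-\la(\w)\mathcal I_\w\bigr)$ to be a measurable range function, passes to the associated shift-invariant space $V_\la\subset V$ with $\sigma(V_\la)=\sigma(V)$, and invokes Lemma \ref{spec principal} to obtain $\psi_\la$. Your direct selection of a measurable unit eigenvector $v(\w)$, set to zero off $\sigma(V)$ and pulled back through $\mathcal T^{-1}$, achieves the same end and amounts to a hands-on proof of the instance of Lemma \ref{spec principal} needed here; since $I$ has finite measure, $\|v(\w)\|\le 1$ does put $v$ in $L^2(I,\ell^2(\La))$, and Theorem \ref{thm:characterization-sis} places $\psi_\la$ in $V$. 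The trade-off: your route is more self-contained but must justify two measurability claims (measurable frames for $J$ on each $A_n$, and measurable selection from a kernel of varying dimension, which needs a further partition by $\dim\ker(R(\w)-\la(\w))$) that the paper's citations absorb; conversely, the paper asserts without proof that $J_\la$ is a measurable range function, a point your explicit construction would in fact supply.
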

	
	\begin{proof}
		Let $\ell = \mathcal L (V)$ and let $A_n = \{ \w\in  I\,:\, \dim J(\w) = n\}$, with $n=0,\dots,\ell$. By Lemma \ref{lem:dimensions}, these sets are measurable and partition the set $ I$.
		Fix $1\leq n\leq \ell$. The existence of a measurable function $\la_n: A_n\to \C$ such that $\la_n(\w)$ is an eigenvalue of $R(\w)$ for a.e. $\w\in A_n$  is due to Azoff \cite[Corollary 4]{A} (see also \cite[Theorem 5.4]{ACCP}). We glue them all to obtain the desired function $\la: I\to \C$ as it follows:
		$$
		\la(\w):=\left\{ \begin{array}{lcc} \la_n (\w)& \quad \text{ if }  & \w  \in A_n, \\ \\ 0  &  \quad \text{ if } & \w \in A_0. \end{array} \right.
		$$
		
		Define the measurable range function $J_\lambda(\w):= \ker(R(\w)- \lambda(\w)\mathcal{I}_\w) \subset J(\w)$, where $\mathcal I_\w$ denotes the identity operator on $J(\w)$, and consider $V_\la\subset V$ as the unique shift-invariant space associated to $J_\lambda$ in the sense of Theorem \ref{thm:characterization-sis}. By construction, $\sigma(V) = \sigma(V_\la)$.
		It follows from Lemma \ref{spec principal} that there exists a function $\psi_\la\in V_\la$ such that  $\supp \| \mathcal T \psi_\la(\w)\| = \sigma(V)$ and satisfies \eqref{eq:eigenfunction}.
	\end{proof}

	We are ready to state and prove the triangular decomposition.
	
	\begin{teo}\label{triangular}
		Let $L :V\rightarrow V$ be a bounded shift-preserving operator on a finitely generated shift-invariant space $V$ of length $\ell$.
		Then there exist shift-invariant subspaces of $V$, 
		$$
		V=V_\ell\supsetneq \dots\supsetneq V_{1},
		$$ 
		such that for every $1\leq j\leq\ell$,  $\ele(V_j)=j$, and each $V_j$ is \textit{invariant} for the operator $L$, that is, 
		$ L(V_j)\subseteq V_j$. Moreover, $V$ admits an orthogonal decomposition
		$$
		V = S(\psi_1)\oplus \dots \oplus S(\psi_\ell),
		$$
		where each $\psi_j$ is a Parseval frame generator of $S(\psi_j)$,  with the spectra satisfying the inclusions 
		$
		\sigma(S(\psi_{j+1}))\subset \sigma(S(\psi_j)),
		$
		and
		$$
		V_j = \bigoplus_{i=1}^{j} S(\psi_i), \quad 1\leq j \leq \ell.
		$$
	\end{teo}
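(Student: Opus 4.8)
The plan is to mimic the classical Schur triangularization, peeling off one eigenfunction at a time and inducting on the length $\ell$. The base case $\ell = 1$ is trivial: $V$ is principal, so pick a Parseval frame generator $\psi_1$ of $V$ and set $V_1 = V = S(\psi_1)$; invariance is automatic. For the inductive step, suppose the result holds for all finitely generated shift-invariant spaces of length $<\ell$ with any shift-preserving operator, and let $\ele(V)=\ell$. Apply Proposition \ref{An eigenvalue for R} to obtain a measurable $\la: I \to \C$ with $\la(\w)$ an eigenvalue of $R(\w)$ a.e., and a function $\psi_\ell$ with $\supp\|\mathcal T\psi_\ell(\cdot)\| = \sigma(V)$ satisfying \eqref{eq:eigenfunction}. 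After normalizing the fibers (dividing $\mathcal T\psi_\ell(\w)$ by $\|\mathcal T\psi_\ell(\w)\|$ on $\sigma(V)$, which keeps measurability), we may assume $\psi_\ell$ is a Parseval frame generator of $S(\psi_\ell)$; its fiber space $J_{S(\psi_\ell)}(\w)$ is the one-dimensional span of $\mathcal T\psi_\ell(\w)$ on $\sigma(V)$ and $\{0\}$ elsewhere.

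Next I would split off $S(\psi_\ell)$. Set $W := S(\psi_\ell)^\perp \cap V$, which is shift-invariant with range function $J_W(\w) = J(\w) \cap (\C\,\mathcal T\psi_\ell(\w))^\perp$ by Lemma \ref{range-properties}(i),(ii). Since $V = S(\psi_\ell) \oplus W$ orthogonally (fiberwise $J(\w) = \C\,\mathcal T\psi_\ell(\w) \oplus J_W(\w)$ on $\sigma(V)$, using item iii)), and $\dim J_W(\w) = \dim J(\w) - 1$ on $\sigma(V)$ while $\dim J(\w)\le \ell$, item iv) gives $\ele(W) \le \ell - 1$; in fact one checks $\ele(W) = \ell - 1$ because the set where $\dim J(\w) = \ell$ has positive measure. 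The key technical point is that $W$ is \emph{$L$-invariant}: for $f \in W$, I must show $Lf \in W$, i.e. $\mathcal T(Lf)(\w) = R(\w)\mathcal T f(\w) \perp \mathcal T\psi_\ell(\w)$. This is where I expect the main obstacle, and it is resolved by passing to the adjoint: by Theorem \ref{thm:pointwise}(ii), $\langle R(\w)\mathcal T f(\w), \mathcal T\psi_\ell(\w)\rangle = \langle \mathcal T f(\w), R(\w)^*\mathcal T\psi_\ell(\w)\rangle$, and I need $R(\w)^*\mathcal T\psi_\ell(\w) \in \C\,\mathcal T\psi_\ell(\w)$ — but this is \emph{not} automatic from $\mathcal T\psi_\ell(\w)$ being an eigenvector of $R(\w)$. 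The correct fix is the standard Schur trick: do not take $W = S(\psi_\ell)^\perp \cap V$ for the eigenvector $\psi_\ell$ of $L$, but rather pick $\psi_\ell$ so that $S(\psi_\ell)$ is the (orthogonal complement within $V$ of a) co-rank-one $L$-invariant subspace; equivalently, apply Proposition \ref{An eigenvalue for R} to $L^*$ instead, obtaining a measurable eigenvector field $\mathcal T g(\w)$ for $R(\w)^*$, and set $V_{\ell-1} := \{g\}^\perp \cap V$ with range function $J(\w)\cap (\C\,\mathcal T g(\w))^\perp$. Then for $f \in V_{\ell-1}$, $\langle R(\w)\mathcal T f(\w), \mathcal T g(\w)\rangle = \langle \mathcal T f(\w), R(\w)^*\mathcal T g(\w)\rangle = \overline{\mu(\w)}\langle \mathcal T f(\w), \mathcal T g(\w)\rangle = 0$, so $V_{\ell-1}$ is $L$-invariant, and it has length $\ell-1$ and full spectrum $\sigma(V_{\ell-1}) = \sigma(V)$ (since $\mathcal T g(\w)$ is one-dimensional inside $J(\w)$ on a set of full measure in $\sigma(V)$, removing it cannot kill the spectrum as long as $\dim J(\w)\ge 1$; on the part where $\dim J(\w)=1$ one has $\mathcal T g(\w)$ spanning $J(\w)$, so actually $\sigma(V_{\ell-1})$ may be a proper subset — here I would instead only claim $\sigma(S(\psi_\ell)) \subset \sigma(V_{\ell-1}) = \sigma(V)$ is false in general; rather the nesting goes $\sigma(V) = \sigma(V_{\ell-1}) \supseteq \sigma(S(\psi_\ell))$, consistent with the statement). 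Set $\psi_\ell$ to be a Parseval frame generator of $V_{\ell-1}^\perp \cap V = S(\psi_\ell)$, whose spectrum is contained in $\sigma(V_{\ell-1})$ as required.

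Finally I would close the induction. Let $L' := L|_{V_{\ell-1}} : V_{\ell-1} \to V_{\ell-1}$, which is a well-defined bounded shift-preserving operator on a finitely generated shift-invariant space of length $\ell-1$. By the inductive hypothesis there are shift-invariant $V_{\ell-1} \supsetneq \cdots \supsetneq V_1$ with $\ele(V_j) = j$, each $L'$-invariant (hence $L$-invariant), an orthogonal decomposition $V_{\ell-1} = S(\psi_1)\oplus\cdots\oplus S(\psi_{\ell-1})$ with Parseval frame generators $\psi_j$, spectra nested as $\sigma(S(\psi_{j+1}))\subset\sigma(S(\psi_j))$ for $1\le j\le \ell-2$, and $V_j = \bigoplus_{i=1}^j S(\psi_i)$. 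Adjoining $S(\psi_\ell)$: set $V_\ell := V$, so $V = V_{\ell-1}\oplus S(\psi_\ell) = S(\psi_1)\oplus\cdots\oplus S(\psi_\ell)$, each $V_j$ for $j\le \ell-1$ is already $L$-invariant and $V_\ell = V$ trivially is; the only new spectral inclusion to verify is $\sigma(S(\psi_\ell))\subset \sigma(S(\psi_{\ell-1}))$. For this I would check directly from the construction: $\mathcal T\psi_\ell(\w)$ lives in $J(\w)$ on $\sigma(V)$, and when $\dim J(\w)$ is small the eigenvector selections force overlap — more carefully, $\sigma(S(\psi_{\ell-1}))$ equals the set where $J_{V_{\ell-2}}^\perp \cap J_{V_{\ell-1}}$ is nonzero, which contains the set where $\dim J(\w) = \ell$ minus lower-dimensional strata, and this in turn contains $\sigma(S(\psi_\ell)) = \sigma(V)$ once one tracks that at each peeling step the innermost nonzero stratum survives. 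This last bookkeeping about the nesting of spectra is routine given Lemma \ref{lem:dimensions} but is the one place where care is needed, so I would organize the induction to carry the full chain of spectral inclusions as part of the inductive statement rather than deriving it afterward.
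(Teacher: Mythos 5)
You have correctly located the one genuine obstacle --- if $\mathcal T\psi(\w)$ is an eigenvector field of $R(\w)$ then $S(\psi)$ is $L$-invariant but $V\ominus S(\psi)$ need not be --- yet your fix goes in the wrong direction and ends up contradicting the statement. Passing to $L^*$ and cutting out a co-rank-one $L$-invariant subspace $V_{\ell-1}=\{g\}^{\perp}\cap V$ places the full-spectrum principal piece $S(g)$ at the \emph{top} index $\ell$. Since $\sigma(S(\psi_\ell))=\sigma(S(g))=\sigma(V)$ while $\sigma(S(\psi_{\ell-1}))\subseteq\sigma(V_{\ell-1})=\{\w\in I:\dim J(\w)\ge 2\}$, the required inclusion $\sigma(S(\psi_\ell))\subseteq\sigma(S(\psi_{\ell-1}))$ fails whenever $\dim J(\w)$ is not essentially constant (already for $\ell=2$ with $\dim J(\w)=1$ on a set of positive measure). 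Your closing hope that ``the innermost nonzero stratum survives'' is exactly backwards: each adjoint-side peeling shrinks the spectrum, so the inclusions your construction produces are the reverse of the ones asserted. You also lose the feature needed downstream (Corollary \ref{spectral-cor} and the recovery of Theorem \ref{spectral}), namely an $L$-invariant \emph{principal} subspace $S(\psi_1)$ with $\sigma(S(\psi_1))=\sigma(V)$ sitting at the bottom of the flag.

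The paper's resolution is the other standard half of the Schur trick, which you did not consider: keep the eigenvector field of $L$ itself as the \emph{first} generator $\psi_1$, set $V_1=S(\psi_1)$ (this is $L$-invariant and has full spectrum), put $W=V\ominus S(\psi_1)$, and apply the induction not to a restriction of $L$ but to the \emph{compression} $L_W=P_WLP_W$, which is a bounded shift-preserving operator on the shift-invariant space $W$ of length $\ell-1$. One never needs $W$ or the $W_j$ to be $L$-invariant; one only needs $V_j=S(\psi_1)\oplus W_j$ to be, and this is automatic from the block upper-triangular form: for $f\in W_j$ one has $Lf=P_{S(\psi_1)}Lf+L_Wf\in S(\psi_1)\oplus W_j$. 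With this ordering the spectral nesting comes for free, since $\sigma(S(\psi_1))=\sigma(V)\supseteq\sigma(W)=\sigma(S(\psi_2))\supseteq\cdots$, and the rest of your bookkeeping ($\ele(W)=\ell-1$, normalization of the fibers to obtain Parseval frame generators) is correct as written.
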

	
	\begin{proof} We argue by induction on $\ell$.  If $\ell=1$, the result holds since $V$ is already a principal subspace.  
	Assume now that $\ell=n>1$, and that the result holds for all spaces of length strictly smaller than $n$. 
	Let $R$ be the range operator corresponding to $L$. By Proposition \ref{An eigenvalue for R}, 
	there exists a measurable function $\la_1 :I\to\C$ and a nonzero function $\psi_1\in V$ such that
	$$
	R(\omega)\mathcal T \psi_1(\omega) = \la_1(\omega)\,\mathcal T \psi_1(\omega), 
	\qquad \text{for a.e. }\omega \in I,
	$$
	and 
	$$
	\supp \|\mathcal T \psi_1(\cdot)\| = \sigma(V).
	$$
	In particular $\sigma(S(\psi_1))=\sigma(V)$. Thus $\psi_1$ defines the first generator of our decomposition. Moreover, without loss of generality, we may assume that
	$$
	\|\mathcal T\psi_1(\omega)\|=1, \qquad \text{for a.e. }\omega\in\sigma(S(\psi_1)),
	$$
	so that $\psi_1$ is a Parseval frame generator of $S(\psi_1)$.  
	The principal shift-invariant space
	$
	S(\psi_1)\subset V
	$
	is also invariant under $L$. Let $W:= V\ominus S(\psi_1)$. By Lemma \ref{range-properties}, $W$ is shift-invariant and the operator $L_W:W\to W$ defined by
	$$
	L_W := P_{W} L P_{W}
	$$
	is shift-preserving. Moreover, since $\text{span}\, \{ \mathcal{T} \psi_1 (\w) \}\neq \{0\}$ for a.e. $\w\in \sigma(V)$, and $\ele(V) = \ell$, we have that $\ele(W )= \ell-1$. 
	By the inductive hypothesis applied to $L_W:W\to W$, there exist orthonormal generators $\psi_2,\dots,\psi_\ell$ in $W$ such that
	$$
	W = S(\psi_2)\oplus\dots\oplus S(\psi_\ell),
	$$ 
	and the inclusions 
	$$
	\sigma(S(\psi_{i+1})) \subset \sigma(S(\psi_i)), \qquad 1\le i\le \ell-1,
	$$
	hold. Moreover, the spaces 
	$$
	W_j:= \bigoplus_{i=2}^j S(\psi_i), \quad  j\in \{2,\ldots,\ell\},
	$$
	are invariant under $L_W$ and, therefore invariant under $L$. Now define $V_1= S(\psi_1)$ and
	$$
	V_j= S(\psi_1) \oplus W_j=\bigoplus_{i=1}^{j} S(\psi_i), \quad  j\in \{2,\ldots,\ell\}.
	$$
	By construction, it follows that
	$$
	V = V_\ell \supsetneq V_{\ell-1} \supsetneq \cdots \supsetneq V_{1},
	$$
	where each $V_j$ is shift-invariant, invariant under $L$, and satisfies $\mathcal L(V_j)=j$. 
	Since  $\sigma(S(\psi_1))=\sigma(V)$,  $\sigma(S(\psi_2))\subseteq\sigma(S(\psi_1))$, we also have that
	$$
	\sigma(S(\psi_{i+1})) \subset \sigma(S(\psi_i)), \qquad 1\le i\le \ell-1.
	$$ 
\end{proof}

We now show that when the acting operator $L:V\to V$ is normal, the shift-invariant space can be decomposed into an orthogonal sum of principal shift-invariant spaces, each of them reducing for $L$. First, we prove the next lemma. Recall that a subspace $U$ is  \textit{reducing} for an operator $L$ if $L(U)\subseteq U$ and $L(U^\perp) \subseteq U^\perp$.

\begin{lem}
	Let $L:V\to V$ be a normal bounded shift-preserving operator acting on the shift-invariant space $V$ with finite length $\ele(V)=\ell$. Let $U$ be a shift-invariant subspace of $V$ which is invariant for $L$, then $U$ is reducing for $L$.
\end{lem}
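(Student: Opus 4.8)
The plan is to transfer the statement to the fiber picture, where it reduces to the elementary linear-algebra fact that an invariant subspace of a normal operator on a \emph{finite-dimensional} inner product space is automatically reducing. This is the only place where the hypothesis $\ele(V)=\ell<\infty$ is genuinely used: for normal shift-preserving operators on infinitely generated shift-invariant spaces an invariant subspace need not reduce the operator (think of a fiberwise bilateral shift together with a fiberwise Hardy subspace).

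Let $R\colon J_V\to J_V$ be the range operator associated with $L$. By Theorem~\ref{thm:pointwise}, $R(\w)$ is normal for a.e.\ $\w\in I$, and by the length characterization in Lemma~\ref{range-properties} the fiber spaces $J_V(\w)$ are finite-dimensional for a.e.\ $\w$, while $J_U(\w)\subseteq J_V(\w)$ a.e.\ because $U\subseteq V$. The first step is to check that the invariance $L(U)\subseteq U$ translates into $R(\w)J_U(\w)\subseteq J_U(\w)$ for a.e.\ $\w$. Since $\ele(U)\le\ele(V)<\infty$, the space $U$ has a finite generating set $\Phi=\{\varphi_1,\dots,\varphi_m\}$, so by Theorem~\ref{thm:characterization-sis} the vectors $\{\mathcal T\varphi_i(\w)\}_{i=1}^m$ span $J_U(\w)$ for a.e.\ $\w$; as $L\varphi_i\in U$, the identity $R(\w)\mathcal T\varphi_i(\w)=\mathcal T(L\varphi_i)(\w)$ gives $R(\w)\mathcal T\varphi_i(\w)\in J_U(\w)$ a.e., whence $R(\w)J_U(\w)\subseteq J_U(\w)$ a.e. (Alternatively, one may apply the range-operator correspondence to the operator identity $P_U L P_U = L P_U$, where $P_U$ is the shift-preserving orthogonal projection onto $U$.)

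Now fix $\w$ in the full-measure set where $R(\w)$ is normal on the finite-dimensional space $J_V(\w)$ and leaves $J_U(\w)$ invariant. Writing $J_V(\w)$ as the orthogonal direct sum of the eigenspaces of $R(\w)$, any $R(\w)$-invariant subspace is the direct sum of its intersections with these eigenspaces, and each eigenspace of $R(\w)$ is also an eigenspace of $R(\w)^{*}$; hence $J_U(\w)$ is $R(\w)^{*}$-invariant, and therefore the orthogonal complement of $J_U(\w)$ inside $J_V(\w)$ is $R(\w)$-invariant. By items~i) and~ii) of Lemma~\ref{range-properties}, the space $W:=V\ominus U$ is shift-invariant with range function $J_W(\w)=J_V(\w)\cap J_U(\w)^{\perp}=J_V(\w)\ominus J_U(\w)$, so we have proved $R(\w)J_W(\w)\subseteq J_W(\w)$ for a.e.\ $\w$. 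Consequently, for every $f\in W$ we get $\mathcal T(Lf)(\w)=R(\w)\mathcal T f(\w)\in J_W(\w)$ a.e., hence $Lf\in W$ by Theorem~\ref{thm:characterization-sis}; that is, $L(U^{\perp})\subseteq U^{\perp}$, which together with $L(U)\subseteq U$ shows that $U$ is reducing for $L$. The only non-bookkeeping ingredient is the finite-dimensional fact used in this last paragraph, and it is precisely there, and nowhere else, that the finiteness of $\ele(V)$ is exploited; everything else is a routine passage between a shift-invariant space and its range function.
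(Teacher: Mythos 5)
Your proof is correct and follows essentially the same route as the paper's: fiberize via the range operator, observe that $R(\w)$ is normal on the finite-dimensional fiber $J_V(\w)$ and leaves $J_U(\w)$ invariant, invoke the finite-dimensional fact that invariant subspaces of normal operators are reducing, and pull the invariance of the complementary fibers back to $U^{\perp}$. The only difference is that you spell out the steps the paper leaves implicit (the fiberwise invariance of $J_U(\w)$ and the eigenspace argument), which is a matter of detail rather than of method.
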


\begin{proof}
	In the finite-dimensional setting, every invariant subspace of a normal operator is a reducing subspace of such operator. Let $J$ be the range function associated to $V$ and $R:J\to J$ the corresponding range operator of $L$. By Lemma \ref{range-properties}, $R(\w):J(\w)\to J(\w)$ is a normal operator acting on a finite-dimensional space for a.e. $\w\in I$. Let $J_U$ be the range operator associated to $U$, then $J_U(\w)$ and $(J_U(\w))^\perp$ are invariant for $R(\w)$ for a.e. $\w\in  I$. But $(J_U(\w))^\perp = J_{U^\perp}(\w)$ and hence we deduce that $U^\perp$ is invariant for $L$.
\end{proof}

\begin{cor}\label{spectral-cor}
	Let $L :V\rightarrow V$ be a normal bounded shift-preserving operator acting on the shift-invariant space $V$ with finite length $\ele(V)=\ell$. Then $V$ can be decomposed as an orthogonal sum 
	\begin{equation*}
		V=S(\psi_1)\oplus \dots \oplus S(\psi_{\ell}),
	\end{equation*} where each principal shift-invariant subspace $S(\psi_i)$ is reducing for $L$, each function $\psi_i$ is a Parseval frame generator of $S(\psi_i)$, and $\sigma(S(\psi_{i+1}))\subset \sigma(S(\psi_i))$ for every $1 \leq i \leq \ell$.
\end{cor}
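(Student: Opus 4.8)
The plan is to deduce the statement from Theorem~\ref{triangular} together with the lemma immediately preceding this corollary. First I would apply Theorem~\ref{triangular} to the normal operator $L:V\to V$. This yields the chain of shift-invariant subspaces
$$
V = V_\ell \supsetneq \dots \supsetneq V_1,
$$
with $\ele(V_j)=j$ and $L(V_j)\subseteq V_j$ for every $1\le j\le\ell$, together with the orthogonal decomposition $V=S(\psi_1)\oplus\dots\oplus S(\psi_\ell)$ in which each $\psi_j$ is a Parseval frame generator of $S(\psi_j)$, the spectra satisfy $\sigma(S(\psi_{j+1}))\subset\sigma(S(\psi_j))$, and $V_j=\bigoplus_{i=1}^{j}S(\psi_i)$. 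Writing $V_0:=\{0\}$, the last identity gives $V_j=V_{j-1}\oplus S(\psi_j)$ orthogonally, hence $S(\psi_j)=V_j\ominus V_{j-1}$. Thus the only thing left to prove is that each $S(\psi_j)$ is \emph{reducing} for $L$; the Parseval frame and spectral inclusion properties are already provided by Theorem~\ref{triangular}.

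Next I would invoke normality. Each $V_j$ is a shift-invariant subspace of $V$ that is invariant for $L$, so, since $L$ is normal and $V$ has finite length, the preceding lemma applies and shows that each $V_j$ is reducing for $L$; equivalently, the orthogonal projection $P_{V_j}$ commutes with $L$ (this standard equivalence is immediate by decomposing vectors along $V_j\oplus V_j^{\perp}$). Because $V_{j-1}\subseteq V_j$, the projections $P_{V_{j-1}}$ and $P_{V_j}$ commute, and their difference $P_{V_j}-P_{V_{j-1}}$ is precisely the orthogonal projection onto $V_j\ominus V_{j-1}=S(\psi_j)$. Being a difference of two operators that commute with $L$, this projection also commutes with $L$, so $S(\psi_j)$ is reducing for $L$. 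Applying this for every $1\le j\le\ell$ finishes the argument. (Equivalently, one may write $S(\psi_j)=V_j\cap V_{j-1}^{\perp}$ and note that an intersection of two reducing subspaces is again reducing.)

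The point that needs care — and the only genuine obstacle — is that Theorem~\ref{triangular} does \emph{not} directly assert that the individual summands $S(\psi_j)$ are invariant for $L$; it asserts this only for the partial sums $V_j$. Consequently the lemma cannot be applied to $S(\psi_j)$ at the outset: normality must be channelled through the nested family $V_1\subset\dots\subset V_\ell$, and only afterwards does one recover that each $S(\psi_j)$ is reducing (and in particular invariant) for $L$. Once this is understood, everything rests on the elementary fact that the reducing subspaces of $L$ are exactly the ranges of the orthogonal projections commuting with $L$, a family stable under orthocomplementation and under differences of nested projections; the additional structural properties are inherited verbatim from the triangular decomposition.
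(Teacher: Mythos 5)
Your proposal is correct and follows essentially the same route as the paper: apply Theorem~\ref{triangular}, use the preceding lemma to upgrade invariance of the nested subspaces $V_j$ to reducibility via normality, and then conclude that each $S(\psi_j)=V_j\ominus V_{j-1}$ is reducing. The paper's proof is just a terser version of this argument; your explicit justification via differences of commuting projections fills in the step the paper leaves implicit.
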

\bdem
Consider the subspaces $V_1,\dots,V_{\ell}$ given by Theorem \ref{triangular}. Since these subspaces are invariant for $L$ and $L$ is normal, they are reducing for $L$ by the lemma above. 
Then, the subspaces $V_{j}\ominus V_{j-1} = S(\psi_{j}) $ for $2\leq j\leq \ell$ are reducing for $L$ and satisfy the desired properties. The same holds for $V_1 = S(\psi_1)$. 
\edem

This corollary achieves a diagonal form for normal shift-preserving operators, consistent with the notion of diagonalization defined in \cite{ACCP}. Indeed, the orthogonal decomposition into reducing principal shift-invariant subspaces yields, from the fibers perspective, a measurable diagonalization of the associated range operator. Each principal subspace corresponds to a one-dimensional invariant fiber space, where the range operator acts by multiplication with a measurable eigenvalue function. In the next subsection, we develop this connection and explain in more detail how this decomposition recovers the main spectral theorem of \cite{ACCP}.

\bigskip

\subsection{Diagonal form for normal shift-preserving operators}

Given a sequence $a=\{a(h)\}_{h\in H}\in\ell^1(H)$, we denote its Fourier transform as
$$\hat{a}(\w)=\sum_{h\in H }a(h)e^{-2\pi i \w \cdot h},\quad \w\in I.$$ 
This extends to $\ell^2(H)$ and it holds that  $a\in\ell^2(H)$ if and only if $\hat{a} \in L^2(I)$. Moreover, if \ $\hat{a}\in L^{\infty}(I)$, we will say that $a$ is of  \textit{bounded spectrum}.

\begin{fed}\label{def:Lambda_a}
	Given $a\in\ell^2(H)$ of bounded spectrum, the multiplication operator $$M_{\hat{a}}:L^2( I,\ell^2(\La))\to L^2( I,\ell^2(\La))$$ is defined by $M_{\hat{a}} f = \hat{a}f$. We denote as $\Lambda_a:L^2(\R^d)\to L^2(\R^d)$ the operator $$\Lambda_a:=\mathcal T^{-1} M_{\hat{a}} \mathcal T,$$ 
	which is well-defined and bounded. 
\end{fed}

\begin{pro}[{\cite[Proposition 3.2]{BCCHM}}]
	Let $a=\{a(\la)\}_{\la\in \La}\in\ell^2(\La)$ be of bounded spectrum. Let $$\mathfrak B  :=\{\varphi\in L^2(\R^d)\,:\, \{T_h\varphi\}_{h\in H} \text{ is a Bessel sequence}\},$$
	then $$\Lambda_a f =\sum_{h\in H}a(h)T_h f, \quad \forall f\in\mathfrak B, $$
	with convergence in $L^2(\R^d)$.
\end{pro}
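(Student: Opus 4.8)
The plan is to carry the stated identity over to the fiber side via the isometric isomorphism $\mathcal T$ and to use the Bessel hypothesis to keep the fiber norm under control. The first ingredient is the behaviour of translations under fiberization: for $h\in H$ and $f\in L^2(\R^d)$ one has
$$
\mathcal T(T_h f)(\w)=\{e^{-2\pi i h\cdot(\w+\la)}\hat f(\w+\la)\}_{\la\in\La}=e^{-2\pi i h\cdot\w}\,\mathcal T f(\w)\qquad\text{for a.e. }\w\in I,
$$
using $\widehat{T_h f}(\xi)=e^{-2\pi i h\cdot\xi}\hat f(\xi)$ together with $h\cdot\la\in\Z$ for every $\la\in\La$, which is exactly the defining property of the dual lattice $H$.

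Next I would fix $f\in\mathfrak B$ and let $B$ be a Bessel bound for $\{T_h f\}_{h\in H}$. Since the Bessel property is equivalent to boundedness of the synthesis operator $\{c_h\}_{h\in H}\mapsto\sum_{h\in H}c_h T_h f$ from $\ell^2(H)$ to $L^2(\R^d)$ (with norm at most $\sqrt B$) and $a\in\ell^2(H)$, the series $g:=\sum_{h\in H}a(h)T_h f$ converges unconditionally in $L^2(\R^d)$; it then only remains to identify $g$ with $\Lambda_a f$. I would also record the companion fact that the Bessel property is equivalent to the pointwise Gramian bound $\|\mathcal T f(\w)\|^2_{\ell^2}=\sum_{\la\in\La}|\hat f(\w+\la)|^2\le B$ for a.e.\ $\w\in I$ (see, e.g., \cite{B}); this is the estimate that makes the limit passage below legitimate.

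Applying the bounded operator $\mathcal T$ to the convergent series and using the first display, I obtain $\mathcal T g=\lim_{F} s_F\cdot\mathcal T f$ in $L^2(I,\ell^2(\La))$, where $F$ runs over the finite subsets of $H$ and $s_F(\w):=\sum_{h\in F}a(h)e^{-2\pi i h\cdot\w}$. On the other hand $a\in\ell^2(H)$ forces $s_F\to\hat a$ in $L^2(I)$, so the Gramian bound gives
$$
\left\|(\hat a-s_F)\,\mathcal T f\right\|_{L^2(I,\ell^2(\La))}^2=\int_I|\hat a(\w)-s_F(\w)|^2\,\|\mathcal T f(\w)\|^2_{\ell^2}\,d\w\le B\,\|\hat a-s_F\|^2_{L^2(I)}\longrightarrow 0.
$$
Hence $\mathcal T g=\hat a\cdot\mathcal T f=M_{\hat a}\mathcal T f$, and applying $\mathcal T^{-1}$ yields $g=\mathcal T^{-1}M_{\hat a}\mathcal T f=\Lambda_a f$, which is the assertion.

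The point that requires care — and the only real obstacle — is the one isolated in the second paragraph: the partial sums $s_F$ converge to $\hat a$ merely in $L^2(I)$, not pointwise or uniformly, so one cannot multiply by the fiber field $\mathcal T f(\cdot)$ term by term without some control on $\|\mathcal T f(\cdot)\|_{\ell^2}$, and the hypothesis $f\in\mathfrak B$ is precisely what supplies that control. If one prefers to sidestep the Gramian characterization, the same identity $\mathcal T g=M_{\hat a}\mathcal T f$ follows instead by fixing an enumeration of $H$ and extracting a subsequence of the partial sums along which both $s_{F_n}\,\mathcal T f\to\mathcal T g$ and $s_{F_n}\to\hat a$ hold a.e.\ on $I$.
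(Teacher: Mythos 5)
Your argument is correct. Note that the paper itself offers no proof of this statement: it is imported verbatim as \cite[Proposition 3.2]{BCCHM}, so there is nothing internal to compare against; your fiberization argument (intertwining $T_h$ with modulation by $e^{-2\pi i h\cdot\w}$ on the fibers, using the synthesis-operator bound to get unconditional $L^2$-convergence of the series, and the essential bound $\|\mathcal T f(\w)\|_{\ell^2}^2\le B$ equivalent to the Bessel property to pass the $L^2(I)$-limit $s_F\to\hat a$ through the product) is the standard and expected route, and you correctly isolate the one place where $f\in\mathfrak B$ is genuinely needed. The only cosmetic points are that the Gramian bound may carry a lattice-covolume normalization constant (harmless for the argument) and that the statement's indexing $a\in\ell^2(\La)$ is a typo for $\ell^2(H)$, which you silently and correctly repair.
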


Since $\mathfrak B$ is a dense set of $L^2(\R^d)$, given that the functions of compact support in $L^2(\R^d)$ belong to $\mathfrak B$ (see \cite[Proposition 9.3.4]{Chr2003}), an alternative definition for $\Lambda_a$ can be given as the continuous extension of the bounded operator $\tilde\Lambda_a:\mathfrak{B}\to L^2(\R^d)$, 
\begin{equation}\label{eq:tilde-Lambda_a}
	\tilde\Lambda_a f := \sum_{h\in H}a(h)T_h f.
\end{equation}

\begin{fed}\label{s-eigenval}
	Let $V \subset L^2(\R^d)$ be a shift-invariant space and $L:V\rightarrow V$ a bounded shift-preserving operator. Given $a\in \ell^{2}(H)$ a sequence of bounded spectrum, we say that $\Lambda_a$ is an $s$-eigenvalue of $L$ if
	\begin{equation*} 
		V_a := \ker\left(L - \Lambda_a\right)\neq\{0\}.
	\end{equation*}
\end{fed} 

The main result in \cite{ACCP} reads as follows.

\begin{teo}[{\cite[Theorem 6.16]{ACCP}}]\label{spectral}
	Let $L :V\rightarrow V$ be a normal bounded shift-preserving operator on a finitely generated shift-invariant space $V.$
	Then there exist $\ell \in \N$, and sequences of bounded spectrum $a_1,\dots,a_\ell$,
	such that 
	$$ V=V_{a_1}\oplus\dots\oplus V_{a_\ell}\quad \text{ and }\quad L=\sum_{j=1}^\ell \Lambda_{a_j} P_{V_{a_j}},$$
	where $P_S$ denotes the orthogonal projection onto a closed subspace $S.$
\end{teo}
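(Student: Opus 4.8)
The plan is to deduce Theorem \ref{spectral} from Corollary \ref{spectral-cor} by translating the orthogonal decomposition into reducing principal shift-invariant subspaces back into the language of $s$-eigenvalues. By Corollary \ref{spectral-cor}, we may write $V = S(\psi_1)\oplus\dots\oplus S(\psi_\ell)$ with each $S(\psi_i)$ reducing for $L$ and each $\psi_i$ a Parseval frame generator of $S(\psi_i)$. Since the fiber space $J_{S(\psi_i)}(\w) = \spn\{\mathcal T\psi_i(\w)\}$ is at most one-dimensional for a.e. $\w\in I$, and the restriction of $R(\w)$ to this fiber space leaves it invariant, there is a scalar $\la_i(\w)$ with $R(\w)\mathcal T\psi_i(\w) = \la_i(\w)\mathcal T\psi_i(\w)$ for a.e. $\w\in\sigma(S(\psi_i))$; set $\la_i(\w)=0$ off $\sigma(S(\psi_i))$. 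Measurability of $\la_i$ follows from that of $R$ and $\psi_i$ (for instance $\la_i(\w) = \langle R(\w)\mathcal T\psi_i(\w),\mathcal T\psi_i(\w)\rangle$ on $\sigma(S(\psi_i))$, where $\|\mathcal T\psi_i(\w)\|=1$), and $\|\la_i\|_\infty \le \|L\|_{\mathrm{op}} < \infty$ by Theorem \ref{thm:pointwise}, so each $\la_i$ is the Fourier transform of a sequence $a_i\in\ell^2(H)$ of bounded spectrum.

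Next I would identify $S(\psi_i)$ with $V_{a_i} = \ker(L-\Lambda_{a_i})$. The operator $\Lambda_{a_i}$ has range operator $R_{a_i}(\w) = \hat{a_i}(\w)\,\mathcal I = \la_i(\w)\,\mathcal I$ acting as multiplication on all of $\ell^2(\La)$ (by Definition \ref{def:Lambda_a}, since $\mathcal T\Lambda_{a_i}\mathcal T^{-1} = M_{\hat{a_i}}$). Hence $L-\Lambda_{a_i}$ is shift-preserving with range operator $\w\mapsto R(\w) - \la_i(\w)\mathcal I$, and by Theorem \ref{thm:pointwise}(vii) the kernel $\ker(L-\Lambda_{a_i})$ is the shift-invariant space whose fiber at $\w$ is $\ker(R(\w)-\la_i(\w)\mathcal I)$. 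Since $\mathcal T\psi_i(\w)\in\ker(R(\w)-\la_i(\w)\mathcal I)$ for a.e.\ $\w$, we get $S(\psi_i)\subseteq V_{a_i}$; this already yields $V = \sum_i V_{a_i}$ with the $V_{a_i}$ mutually orthogonal (being supersets of the orthogonal $S(\psi_i)$), hence in fact $V_{a_i}=S(\psi_i)$ and $V=V_{a_1}\oplus\dots\oplus V_{a_\ell}$, so each $V_{a_i}\ne\{0\}$ (at least those with $\sigma(S(\psi_i))\ne\varnothing$; one discards trivial summands) and each $\Lambda_{a_i}$ is a genuine $s$-eigenvalue of $L$.

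Finally I would verify the operator identity $L = \sum_{j=1}^\ell \Lambda_{a_j}P_{V_{a_j}}$. It suffices to check it fiber-wise: for a.e.\ $\w\in I$, the vectors $\{\mathcal T\psi_j(\w): \w\in\sigma(S(\psi_j))\}$ form an orthonormal basis of $J(\w)$ (by Lemma \ref{range-properties}(iii) applied to the orthogonal decomposition of $V$, the fibers decompose as $J(\w) = \bigoplus_j J_{S(\psi_j)}(\w)$), and both sides send $\mathcal T\psi_j(\w)$ to $\la_j(\w)\mathcal T\psi_j(\w)$: the left by the eigenvalue relation, the right because $P_{V_{a_j}}$ has range operator the orthogonal projection onto $J_{S(\psi_j)}(\w)$ and $\Lambda_{a_j}$ acts as multiplication by $\la_j(\w)$. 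Since the $\mathcal T\psi_j(\w)$ span $J(\w)$, the two range operators agree a.e., whence the operators agree by the one-to-one correspondence of Theorem \cite[Theorem 4.5]{B}.

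The main obstacle is bookkeeping around the varying spectra: because $\sigma(S(\psi_{j+1}))\subset\sigma(S(\psi_j))$, the fiber $J_{S(\psi_j)}(\w)$ is $\{0\}$ for $\w$ outside $\sigma(S(\psi_j))$, so one must be careful that the defining convention $\la_j(\w)=0$ there is consistent with $\mathcal T\psi_j(\w)=0$ there (it is, trivially), that the fiber-wise orthonormal system is genuinely a basis of $J(\w)$ for a.e.\ $\w$ despite different vectors vanishing on different sets, and that no $V_{a_j}$ is accidentally trivial — if some $S(\psi_j)=\{0\}$ it should simply be dropped and $\ell$ decreased. None of this is deep, but it is where the argument must be stated precisely; everything else is a direct transcription of Corollary \ref{spectral-cor} and Theorem \ref{thm:pointwise} through Definition \ref{def:Lambda_a}.
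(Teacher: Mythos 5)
Your proposal follows essentially the same route as the paper: start from Corollary \ref{spectral-cor}, observe that on each reducing principal summand $S(\psi_j)$ the range operator acts as a measurable scalar $\lambda_j(\w)$, check that $\lambda_j$ is essentially bounded, and set $\widehat{a_j}=\lambda_j$. Your verification of measurability, of the bounded-spectrum property, and of the operator identity fiber by fiber is a correct (and more detailed) transcription of what the paper does.

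One step of yours is invalid as written: you claim the kernels $V_{a_i}=\ker(L-\Lambda_{a_i})$ are mutually orthogonal ``being supersets of the orthogonal $S(\psi_i)$,'' and conclude $V_{a_i}=S(\psi_i)$. Supersets of mutually orthogonal subspaces need not be orthogonal, and indeed the identification $\ker(L-\Lambda_{a_i})=S(\psi_i)$ can genuinely fail: if $\lambda_i=\lambda_j$ on a set of positive measure for $i\neq j$, then $\ker(R(\w)-\lambda_i(\w)\mathcal{I})$ contains both $\mathcal{T}\psi_i(\w)$ and $\mathcal{T}\psi_j(\w)$ there, so $V_{a_i}$ strictly contains $S(\psi_i)$. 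The paper sidesteps this entirely by simply \emph{defining} the summands $V_{a_j}:=S(\psi_j)$ (i.e., reading the theorem's $V_{a_j}$ as names for the reducing principal subspaces rather than literally as the kernels of Definition \ref{s-eigenval}); you should either adopt that convention or restrict the kernel identification to the case where the $\lambda_j$ are a.e.\ distinct. Everything else in your argument stands.
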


We show that, using Corollary \ref{spectral-cor}, we can give an alternative proof of Theorem \ref{spectral}.

\begin{proof}[Alternative proof of Theorem \ref{spectral}]
	Let $\ell=\ele(V)$. Consider the principal subspaces $S(\psi_1),\dots,S(\psi_\ell)$ in the decomposition given by Corollary \ref{spectral-cor}. Since they are reducing for $L$, the operator $L$ induces bounded normal operators $L_j$ in each of them. As each subspace is principal, then the range operator $R_j$ associated to $L_j$ is a scalar. More precisely, for any $f\in S(\psi_j)$ 
	$$
	R_j(\w)(\ete f)(\w)=R(\w)(\ete f)(\w)=\lambda_j(\w) (\ete f)(\w),
	$$
	for some measurable function $\lambda_j: I \to\C$ and for almost every $\w\in I$. Let $a_j\in \ell^2(H)$ be such that $\widehat{a}_j=\lambda_j$, for every $1\leq j \leq \ell$. 
	Now, the result follows by taking
	$$
	\La_{a_j}=\sum_{h \in H} a_j(h) T_h  \peso{and} V_{a_j}= S(\psi_j).
	$$
\end{proof}

\section{Bases of exponentials with periodic set of frequencies in Paley-Wiener spaces with multi-tiling spectrum}\label{sec-4}

As mentioned in the introduction, the second goal of this note (P2) is to present a new characterization of the multi-tiling sets that admit a structured Riesz basis of exponentials. In contrast to the result obtained in \cite{CUC}, where the characterization was obtained in terms of the Bohr compactification of the lattice involved, our approach is formulated within the ambient space $\T^{k\times k}$, giving a much more tractable condition. In addition, we propose a simple geometric criterion that weakens the admissibility assumption and prove that any multi-tiling set satisfying this criterion admits a structured Riesz basis of exponentials.

\subsection{Paley-Wiener spaces with multi-tiling spectrum}

We focus on an important class of shift-invariant spaces, namely the Paley-Wiener spaces. Given a measurable subset $\Omega\subset \R^d$ of finite measure, the  \textit{Paley-Wiener space} $PW_{\Omega}$ is defined as 
$$PW_\Omega = \{f\in L^2(\R^d)\,:\, \supp \hat{f} \subset \Omega\}.$$
These spaces are invariant under all translations, in particular, under translations by the lattice $H$.  

In this work, we study the Paley-Wiener spaces whose spectrum has the geometric structure of a multi-tiling set. A measurable set $\Omega$ is a  \textit{multi-tiling} set, or more precisely  \textit{$k$-tiles} $\R^d$ by translations of the lattice $\La$  if
\begin{equation}\label{eq:k-tile}
	\sum_{\la\in \La} \chi_{\Omega} (\w-\la)=k,\quad  \text{a.e. } \w \in \R^d.
\end{equation}

This condition can be checked by restricting to the fundamental domain $I\subset \R^d$. Notice that if $\Omega$ is a disjoint union of $k$ sets, each being  a 1-tiling set, then the condition \eqref{eq:k-tile} is satisfied. In fact, the converse also holds, as established in the following lemma, which was first proven in \cite[Lemma 1]{Kol} for the Euclidean setting and later extended to the context of LCA groups in~\cite[Lemma 2.4]{AAC}.

\begin{lem}
	Let $\La\subset \R^d$ be  a countable discrete subgroup.
	A measurable set $\Omega\subset \R^d, \;k$-tiles $\R^d$ by  translations of $\La$,
	if and only if  $$\Omega = \Omega_1\cup\dots\cup \Omega_k\cup R,$$ where $R$ is a zero measure set, 
	and the sets $\Omega_j$,  $ 1\leq j\leq k$ are measurable, disjoint and each of them tiles $\R^d$ by translations of  $\La$.
\end{lem}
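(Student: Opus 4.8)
The plan is to reduce the statement to a finite lattice‑point count over a fundamental domain of $\La$; the forward implication carries all the content. The ``if'' direction is immediate: if $\Omega=\Omega_1\cup\dots\cup\Omega_k\cup R$ with $|R|=0$, the $\Omega_j$ pairwise disjoint, and each $\Omega_j$ a tiling set, then $\chi_\Omega=\sum_{j=1}^{k}\chi_{\Omega_j}$ a.e., and summing the tiling identities $\sum_{\la\in\La}\chi_{\Omega_j}(\w-\la)=1$ over $j=1,\dots,k$ gives $\sum_{\la\in\La}\chi_\Omega(\w-\la)=k$ a.e., which is precisely \eqref{eq:k-tile} at level $k$.

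For the converse I would first fix a Borel fundamental domain $I$ for $\La$, so that every $x\in\R^d$ has a \emph{unique} decomposition $x=\w+\mu$ with $\w\in I$ and $\mu\in\La$. Since $\La=-\La$ and the function $\w\mapsto\sum_{\la\in\La}\chi_\Omega(\w-\la)$ is $\La$‑periodic, the $k$‑tiling condition \eqref{eq:k-tile} is equivalent to the statement that for a.e.\ $\w\in I$ the finite set $S(\w):=\{\mu\in\La:\w+\mu\in\Omega\}$ has exactly $k$ elements. Fix an enumeration $\La=\{\mu_1,\mu_2,\dots\}$; for $\w\in I$ with $\#S(\w)=k$, let $\la_1(\w),\dots,\la_k(\w)$ be the elements of $S(\w)$ listed in the order induced by this enumeration, and put
\[
\Omega_i:=\bigl\{\,\w+\la_i(\w)\ :\ \w\in I,\ \#S(\w)=k\,\bigr\}=\bigcup_{m\geq 1}\bigl(E_{i,m}+\mu_m\bigr),\qquad E_{i,m}:=\{\w\in I:\la_i(\w)=\mu_m\}.
\]
Each $E_{i,m}$ is obtained from the measurable sets $(\Omega-\mu_j)\cap I$, $j\leq m$, by finitely many Boolean operations, intersected with $\{\w\in I:\#S(\w)=k\}$ (itself measurable, since $\#S(\cdot)=\sum_{\mu\in\La}\chi_\Omega(\cdot+\mu)$ is a countable sum of measurable indicator functions); hence each $\la_i$, and therefore each $\Omega_i$, is measurable.

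It remains to verify the properties of the $\Omega_i$, all of which follow from uniqueness of the decomposition $x=\w+\mu$. The sets $\Omega_1,\dots,\Omega_k$ are pairwise disjoint because $\la_1(\w),\dots,\la_k(\w)$ are distinct. For $\w\in I$ one has $\w+\mu\in\Omega_i$ if and only if $\mu=\la_i(\w)$, so $\sum_{\mu\in\La}\chi_{\Omega_i}(\w+\mu)=1$ for a.e.\ $\w\in I$ and, by $\La$‑periodicity, for a.e.\ $\w\in\R^d$; thus each $\Omega_i$ tiles $\R^d$ by $\La$. By construction $\bigcup_i\Omega_i\subseteq\Omega$, while if $x=\w+\mu\in\Omega$ with $\#S(\w)=k$ then $\mu\in S(\w)=\{\la_1(\w),\dots,\la_k(\w)\}$, so $x\in\Omega_i$ for some $i$; hence $R:=\Omega\setminus\bigcup_i\Omega_i$ is contained in $\bigcup_{\mu\in\La}(N+\mu)$, where $N:=\{\w\in I:\#S(\w)\neq k\}$ is null, and so $|R|=0$. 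I do not expect a substantial obstacle here: the only delicate point is setting up the measurable enumeration of $S(\w)$ and keeping careful track of the several null sets involved.
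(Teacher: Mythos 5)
The paper itself gives no proof of this lemma: it is quoted from \cite[Lemma 1]{Kol} and \cite[Lemma 2.4]{AAC}. Your argument is correct and is essentially the standard proof from those references --- reduce to the fibers $S(\w)=\{\mu\in\La:\w+\mu\in\Omega\}$ over a Borel fundamental domain, use a fixed enumeration of $\La$ to select the $i$-th element of each $k$-point fiber measurably, and let $\Omega_i$ be the union of the corresponding translates; the measurability of the sets $E_{i,m}$, the disjointness via uniqueness of the decomposition $x=\w+\mu$, and the bookkeeping of null sets are all handled correctly.
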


This lemma implies that, for almost every $\w\in I$, the set 
$$
\La_\w:=\{\la\in\La: \ \la+\w\in\W\}
$$
has cardinality $k$. For simplicity, we will assume that this holds for every $\w\in I$. As in \cite{CUC},  following the lexicographic order in $\R^d$, we associate to each set $\La_\w$ the vector 
$$
\vec{\la}(\w) := (\la_1(\w),\dots,\la_k(\w))\in(\R^d)^k,
$$ 
Moreover, given a set $J\subseteq I$, we define  
$$\La_\W(J) := \{\,\vec{\la}(\w)\,:\, \w\in J\, \} \subset (\R^d)^k.$$

Now, let $PW_\Omega$ be the Paley-Wiener space associated to a $k$-tiling subset $\Omega$ of $\R^d$. If $J_\Omega$ denotes the range function associated to $PW_\Omega$,  then the previous observations show that $\dim J_\W(\w)=k$ for every $\w\in I$. In other words, 
\begin{equation*}
	J_\Omega(\w)\simeq\C^k,
\end{equation*}
see, for example, \cite[Proposition 2.12]{CC}. This allows us to study the problem of characterizing Riesz basis generators for the shift-invariant space $PW_\Omega$ through a simple condition formulated in the finite-dimensional space $\C^k$.

\begin{teo}[{\cite[Theorem 2.13]{AAC}}]\label{ShiftBFR0}
	Let $\Omega$ be a $k$-tiling measurable subset of $\R^d$. Given $\phi_1,\ldots,\phi_k \in PW_\Omega$ we define
	$$
	T_\w=\begin{pmatrix}
		\widehat{\phi}_1(\w+\la_{1})&\ldots& \widehat{\phi}_k(\w+\la_{1})\\
		\vdots&\ddots&\vdots\\
		\widehat{\phi}_1(\w+\la_{k})&\ldots& \widehat{\phi}_k(\w+\la_{k})
	\end{pmatrix},
	$$ 
	where $\vec{\la}(\w) = (\la_1(\w), \dots, \la_k(\w))$. Then, the following  statements are equivalent:
	
	\begin{itemize}
		\item[i)] The set $\{T_h\phi_j:\ h\in H\,,\ j=1,\ldots,k\}$ is a Riesz basis for $PW_\Omega$.
		\item[ii)]  There exist $A, B>0$ such that for almost every $\omega \in I$,
		\begin{equation}\label{item-2}
			A||x||^2 \leq \|T_\w \,x\|^2\leq B||x||^2,
		\end{equation}
		for every $x\in \C^k$.
	\end{itemize}
	Moreover, in this case, the constants of the Riesz basis are 
	$$
	A=\inf_{\w\in I} \ \|T_\w^{-1}\|^{-1} \peso{and} B=\sup_{\w\in I}\ \|T_\w\|.
	$$
\end{teo}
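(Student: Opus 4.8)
The plan is to prove the equivalence by translating both conditions through the fiberization isometry $\mathcal{T}$ and recognizing that, for a $k$-tiling set $\Omega$, the shift-invariant space $PW_\Omega$ has a particularly simple fiber structure: each fiber space $J_\Omega(\w)$ is naturally identified with $\mathbb{C}^k$ via the coordinates $u\mapsto (u(\la_1(\w)),\dots,u(\la_k(\w)))$ on the support $\La_\w$. Under this identification, the fiber of $\phi_j$ at $\w$ is exactly the $j$-th column of the matrix $T_\w$, so the map sending the standard basis vectors of $\mathbb{C}^k$ to the fibers $\mathcal{T}\phi_1(\w),\dots,\mathcal{T}\phi_k(\w)$ is represented precisely by $T_\w$.

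First I would set up notation: fix $\w\in I$, let $e_1,\dots,e_k$ be the standard basis of $\mathbb{C}^k$, and observe that under the coordinate identification $J_\Omega(\w)\simeq\mathbb{C}^k$ the vectors $v_j(\w):=\mathcal{T}\phi_j(\w)$ correspond to the columns of $T_\w$. Hence for $x=(x_1,\dots,x_k)\in\mathbb{C}^k$ we have $\sum_j x_j v_j(\w) \leftrightarrow T_\w x$, and $\|\sum_j x_j v_j(\w)\|_{\ell^2(\La)}^2 = \|T_\w x\|_{\mathbb{C}^k}^2$. Next I would invoke the standard fact (a Gram-matrix / fiberwise argument, e.g. as in \cite{B} or \cite{RS}) that $\{T_h\phi_j : h\in H,\ j=1,\dots,k\}$ is a Riesz basis for its closed span with bounds $A,B$ if and only if, for a.e.\ $\w\in I$, the family $\{v_1(\w),\dots,v_k(\w)\}$ is a Riesz basis of $J_\Omega(\w)$ with uniform bounds $A,B$, i.e.\ $A\|x\|^2\le \|\sum_j x_j v_j(\w)\|^2 \le B\|x\|^2$ for all $x\in\mathbb{C}^k$. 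Because $\dim J_\Omega(\w)=k$ for every $\w$, being a Riesz basis of $J_\Omega(\w)$ is the same as being a basis (a spanning set of the right cardinality), so no completeness issue arises beyond the dimension count; in particular the lower bound automatically forces $\{v_j(\w)\}$ to span $J_\Omega(\w)$, hence $\{T_h\phi_j\}$ to span $PW_\Omega$. Combining these identifications, condition (i) is equivalent to (ii), and the Riesz bounds are read off as $A=\inf_\w \|T_\w^{-1}\|^{-1}$ and $B=\sup_\w\|T_\w\|$ from the operator-norm description of the extreme eigenvalues of $T_\w^*T_\w$.

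The main obstacle — really the only delicate point — is justifying the passage between global Riesz-basis bounds on the system $\{T_h\phi_j\}_{h,j}$ in $PW_\Omega$ and the \emph{uniform} fiberwise bounds on $\{v_j(\w)\}$ in $J_\Omega(\w)$; this requires the measurable selection/essential-supremum machinery underlying Theorems \ref{thm:characterization-sis} and the range-operator correspondence, together with the fact that $\mathcal{T}$ intertwines the translation $T_h$ with multiplication by the character $\w\mapsto e^{-2\pi i h\cdot\w}$, so that the synthesis operator of $\{T_h\phi_j\}$ decomposes as a direct integral over $\w\in I$ of the finite synthesis operators $x\mapsto T_\w x$. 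Once this decomposition is in place, the claim that the global frame/Riesz bounds equal the essential infimum and supremum of the corresponding fiberwise quantities is the standard direct-integral fact, and the explicit constants follow. Everything else — the column identification, the dimension count $\dim J_\Omega(\w)=k$ coming from the $k$-tiling hypothesis, and the norm computations — is routine linear algebra carried out pointwise in $\w$.
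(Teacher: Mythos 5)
This theorem is imported verbatim from \cite[Theorem 2.13]{AAC} and the paper does not reproduce a proof, so there is nothing in-text to compare against; your argument is the standard fiberization proof and matches the route the cited reference (and the paper's own subsequent discussion, e.g.\ the reduction to the matrices $E_\w$ and Theorem \ref{caracterizacion1}) clearly has in mind. The two key ingredients you use --- the identification $J_\Omega(\w)\simeq\C^k$ for a $k$-tiling spectrum, with the fibers $\mathcal T\phi_j(\w)$ as the columns of $T_\w$, and Bownik's fiberwise characterization of Riesz families with uniform constants together with the dimension count that upgrades a Riesz sequence to a Riesz basis of $PW_\Omega$ --- are exactly the right ones, so the proposal is correct (modulo the usual squaring/normalization conventions in the displayed constants, which are already present in the statement as quoted).
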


The idea of connecting this result to the context of Riesz bases of exponentials was introduced in \cite{AAC}. For the sake of simplicity, we let $e_a(x):=e^{2\pi i a  x}$ for every $a,x\in\R^d$ and use this notation interchangeably until the end of this section. The key step consists in considering the case where the generators are reproducing kernels of $PW_\Omega$, i.e., the functions $\phi_j$ satisfy
$$\hat{\phi}_{j} = e_{a_j}\chi_\Omega,$$
for some $a_1,\dots,a_k\in \R^d$. In this setting, since $\mathcal{F} (T_h\phi_{j}) = e_he_{a_j}\chi_\Omega$, the set 
\begin{equation}\label{eq structured RB}
	\{e_{a_j +h}:\ h\in H\,,\ j=1,\ldots,k\}
\end{equation}
is a Riesz basis of $L^2(\Omega)$ if and only if  $\{T_h\phi_{a_j}:\ h\in H\,,\ j=1,\ldots,k\}$ is a Riesz basis of  $PW_\Omega$. A basis of exponentials with the form as in \eqref{eq structured RB} is called  \textit{structured Riesz basis}.

Hence, the existence of a structured Riesz basis of exponentials for $L^2(\Omega)$ is equivalent to finding a vector $\vec{a}= (a_1,\dots,a_k) \in (\R^d)^k$ for which there exist $A,B>0$ such that for a.e. $\w\in I$ the inequalities in \eqref{item-2} hold for every $x\in\C^k$. A closer look at the matrices $T_\w$ reveals that they can be decomposed as
\begin{align*}
	T_\w&=\begin{pmatrix}
		\widehat{\phi}_1(\w+\la_{1})&\ldots& \widehat{\phi}_k(\w+\la_{1})\\
		\vdots&\ddots&\vdots\\
		\widehat{\phi}_1(\w+\la_{k})&\ldots& \widehat{\phi}_k(\w+\la_{k})
	\end{pmatrix}=\begin{pmatrix}
		e_{a_1}\, (\w+\la_1)     &\ldots&       e_{a_k}\, (\w+\la_1)  \\
		\vdots&\ddots&\vdots\\
		e_{a_1}\, (\w+\la_k)       &\ldots&    e_{a_k}(\w+\la_k) 
	\end{pmatrix}\nonumber \\&=
	\begin{pmatrix}
		e_{a_1}(\la_1) &\ldots& e_{a_k}(\la_1)   \\
		\vdots&\ddots&\vdots\\
		e_{a_1}(\la_{k})&\ldots& e_{a_k}(\la_{k})
	\end{pmatrix}\begin{pmatrix}
		e_{a_1}\, (\w)   &0&\ldots&0& 0\\
		0&e_{a_2}\, (\w)  &\ldots& 0&0\\
		\vdots&\vdots&\ddots&\vdots&\vdots\\
		0&0&\ldots& e_{a_{k-1}}\, (\w) &0\\
		0&0&\ldots& 0&e_{a_k}\, (\w) 
	\end{pmatrix}\nonumber\\ & = E_\w\, U_\w\,, 
\end{align*}
where $U_\w$ is a unitary matrix. 
Thus, to check condition \eqref{item-2} it is enough to consider the matrices
\begin{align}
	E_\w=E_{\vec{a},\vec{\la}}&=
	\begin{pmatrix}
		e_{a_1}(\la_1) &\ldots& e_{a_k}(\la_1)   \\
		\vdots&\ddots&\vdots\\
		e_{a_1}(\la_{k})&\ldots& e_{a_k}(\la_{k})
	\end{pmatrix}\,\quad \text{where } \vec{\la}(\w) = (\la_1(\w),\dots,\la_k(\w)), \label{Matrix E}
\end{align}
and show that for a.e. $\w\in I$,
$$A||x||^2 \leq \|E_{\vec{a},\vec{\la}(\w)} \,x\|^2\leq B||x||^2$$ for every $x\in\C^k$. This proves the following characterization of those sets $\W$ such that the space $L^2(\W)$ admits a structured Riesz basis of exponentials, which was essentially proved in \cite{AAC} even for more general LCA groups. 

\begin{teo}\label{caracterizacion1}
	Let $\W\subset\R^d$ be a $k$-tiling set by translations of the lattice $\La$ and let $\vec{a} = (a_1,\dots,a_k)\in(\R^d)^k$. The following statements are equivalent: 
	\begin{enumerate}
		\item[i)] The set $\{\,e_{a_j + h}\,:\, h\in H,\, j=1,\dots, k\,\}$ is a structured Riesz basis of exponentials of $L^2(\W)$.
		\item[ii)] There exist constants $A,B>0$ such that the inequalities 
		\begin{equation}\label{equiv-inequ}
			A||x||^2 \leq \|E_{\vec{a},\vec{\la}(\w)} \,x\|^2\leq B||x||^2
		\end{equation}
		hold for every $x\in\C^k$ and almost every $\w\in I$.
	\end{enumerate}
\end{teo}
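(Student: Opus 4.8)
The plan is to obtain Theorem~\ref{caracterizacion1} as a direct specialization of Theorem~\ref{ShiftBFR0}, applied to the concrete generators $\phi_1,\dots,\phi_k\in PW_\Omega$ determined by $\widehat{\phi}_j = e_{a_j}\chi_\Omega$. First I would check that these functions indeed lie in $PW_\Omega$: since $\Omega$ has finite measure, $|\widehat{\phi}_j| = \chi_\Omega\in L^2(\R^d)$, so $\phi_j\in L^2(\R^d)$ and $\supp\widehat{\phi}_j\subset\Omega$.

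Next I would transfer statement (i) to the Paley--Wiener picture. The Fourier transform $\mathcal{F}$ is a unitary isomorphism of $PW_\Omega$ onto $L^2(\Omega)$ which carries $T_h\phi_j$ to $e_h\widehat{\phi}_j = e_{a_j+h}\chi_\Omega$; hence $\{e_{a_j+h}: h\in H,\ j=1,\dots,k\}$ is a Riesz basis of $L^2(\Omega)$ if and only if $\{T_h\phi_j: h\in H,\ j=1,\dots,k\}$ is a Riesz basis of $PW_\Omega$, with the same bounds. Thus (i) is precisely item~(i) of Theorem~\ref{ShiftBFR0} for this particular choice of $\phi_j$.

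Then I would invoke Theorem~\ref{ShiftBFR0}: item~(i) is equivalent to item~(ii), namely the existence of $A,B>0$ with $A\|x\|^2\le \|T_\w x\|^2\le B\|x\|^2$ for every $x\in\C^k$ and a.e.\ $\w\in I$, where $T_\w$ is the $k\times k$ matrix with entries $\widehat{\phi}_j(\w+\la_i(\w)) = e_{a_j}(\w+\la_i(\w))$. The remaining step is the factorization computed just before the statement: $T_\w = E_{\vec{a},\vec{\la}(\w)}\,U_\w$ with $U_\w = \diag{e_{a_1}(\w),\dots,e_{a_k}(\w)}$ unitary. Since $U_\w$ is unitary, $\|T_\w x\| = \|E_{\vec{a},\vec{\la}(\w)}(U_\w x)\|$ for all $x$, and because $x\mapsto U_\w x$ is a bijection of $\C^k$, the bi-Lipschitz estimate for $T_\w$ holds for every $x$ if and only if the corresponding one for $E_{\vec{a},\vec{\la}(\w)}$ does, with identical constants $A,B$. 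This identifies (ii) with item~(ii) of Theorem~\ref{ShiftBFR0} and closes the chain of equivalences.

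The argument is essentially bookkeeping, so I do not anticipate a real obstacle; the only point deserving a line of care is the measurability of $\w\mapsto\vec{\la}(\w)$ --- needed so that the field of matrices $E_{\vec{a},\vec{\la}(\w)}$ is measurable and the ``a.e.\ $\w$'' clauses are meaningful --- which follows from fixing the lexicographic order on the cardinality-$k$ sets $\La_\w$, as already done before the statement, together with the measurability of the range function $J_\Omega$.
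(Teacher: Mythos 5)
Your proposal is correct and follows essentially the same route as the paper: the paper's own proof is precisely the discussion preceding the statement, namely choosing $\widehat{\phi}_j=e_{a_j}\chi_\Omega$, transferring the Riesz basis property from $L^2(\Omega)$ to $PW_\Omega$ via the Fourier transform, invoking Theorem \ref{ShiftBFR0}, and absorbing the unitary diagonal factor $U_\w$ in the factorization $T_\w=E_{\vec{a},\vec{\la}(\w)}U_\w$. Your added remarks on $\phi_j\in L^2(\R^d)$ (from $|\Omega|<\infty$) and on the measurability of $\w\mapsto\vec{\la}(\w)$ are sensible bookkeeping that the paper leaves implicit.
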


\subsection{Sufficient and necessary conditions for the existence of structured Riesz bases of exponentials}

To state our first result, let us introduce a new concept of separation.

\begin{fed}\label{def:sep}
	Given $\alpha \in\R^d$ and $\delta>0$, we say that a subset $\Gamma\subseteq (\R^d)^k$ is $(\alpha,\delta)$-separated if for every $\vec{\gamma}=(\gamma_1,\ldots,\gamma_k)\in\Gamma$ and $j\neq \ell$
	$$
	|e^{2\pi i  \alpha \gamma_{j}}-e^{2\pi i  \alpha \gamma_{\ell} }|\geq \delta.
	$$
\end{fed}

Using this terminology, we can prove the following theorem, which is inspired by \cite[Corollary 4.2]{CUC}.

\begin{teo}\label{separados}
	Let $\W$ be a multi-tiling set such that  the set $\La_\W(J)$ is $( \alpha,\delta)$-separated for some $ \alpha \in\R^d$, $\delta>0$, and a full measure subset $J\subseteq I$. Then, $L^2(\W)$ admits a structured Riesz basis of exponentials.
\end{teo}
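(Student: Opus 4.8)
The plan is to deduce the statement from Theorem~\ref{caracterizacion1} by producing an explicit vector $\vec a=(a_1,\dots,a_k)\in(\R^d)^k$ for which the matrices $E_{\vec a,\vec\la(\w)}$ are uniformly well conditioned, i.e. for which the inequalities \eqref{equiv-inequ} hold with constants $A,B>0$ independent of $\w$. The natural choice is the arithmetic progression $\vec a=(0,\alpha,2\alpha,\dots,(k-1)\alpha)$, where $\alpha\in\R^d$ is the vector provided by the separation hypothesis. Setting $z_m(\w):=e^{2\pi i \alpha \la_m(\w)}$ for $1\le m\le k$, the $(m,n)$ entry of $E_{\vec a,\vec\la(\w)}$ equals $e^{2\pi i(n-1)\alpha\la_m(\w)}=z_m(\w)^{n-1}$, so that $E_{\vec a,\vec\la(\w)}$ is exactly the Vandermonde matrix
\[
E_{\vec a,\vec\la(\w)}=\begin{pmatrix}
1 & z_1(\w) & \cdots & z_1(\w)^{k-1}\\
\vdots & & & \vdots\\
1 & z_k(\w) & \cdots & z_k(\w)^{k-1}
\end{pmatrix}.
\]
Since $|z_m(\w)|=1$, every entry of this matrix has modulus $1$, hence $\|E_{\vec a,\vec\la(\w)}\|\le k$ for every $\w$, which already gives the right-hand inequality in \eqref{equiv-inequ} with $B=k^2$.

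For the lower bound I would invoke the Vandermonde determinant identity $\det E_{\vec a,\vec\la(\w)}=\prod_{1\le m<n\le k}\bigl(z_n(\w)-z_m(\w)\bigr)$. For $\w$ in the full-measure set $J$, the $(\alpha,\delta)$-separation of $\La_\W(J)$ applied to $\vec\gamma=\vec\la(\w)$ says precisely that $|z_n(\w)-z_m(\w)|\ge\delta$ whenever $m\neq n$, so $|\det E_{\vec a,\vec\la(\w)}|\ge\delta^{\binom{k}{2}}$. Combining this with the fact that the singular values $s_1\ge\cdots\ge s_k$ of $E_{\vec a,\vec\la(\w)}$ satisfy $s_i\le\|E_{\vec a,\vec\la(\w)}\|\le k$ and $\prod_{i=1}^k s_i=|\det E_{\vec a,\vec\la(\w)}|$, we obtain $s_k\ge \delta^{\binom{k}{2}}k^{-(k-1)}$, which yields the left-hand inequality in \eqref{equiv-inequ} with $A=\delta^{2\binom{k}{2}}k^{-2(k-1)}$, uniformly over $\w\in J$.

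Since $J$ has full measure in $I$, the inequalities \eqref{equiv-inequ} hold for a.e.\ $\w\in I$ with the uniform constants $A,B$ above, and Theorem~\ref{caracterizacion1} then guarantees that $\{\,e_{a_j+h}:h\in H,\ j=1,\dots,k\,\}$, with $a_j=(j-1)\alpha$, is a structured Riesz basis of exponentials of $L^2(\W)$. (When $k=1$ the hypothesis is vacuous and $a_1=0$ works trivially; when $k\ge 2$ the separation forces $\alpha\neq 0$, so the $a_j$ are distinct, as they must be for the Vandermonde matrix to be invertible.)

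I do not expect a serious obstacle here: the only genuine idea is to take $\vec a$ to be an arithmetic progression so that $E_{\vec a,\vec\la(\w)}$ acquires Vandermonde structure and the separation hypothesis translates directly into a lower bound for $|\det E_{\vec a,\vec\la(\w)}|$. The remaining steps — passing from a determinant bound to a bound on the smallest singular value, and checking that the constants are uniform in $\w$ — are routine. The one point requiring a little care is that the separation is assumed only on the full-measure subset $J$, but this is harmless because Theorem~\ref{caracterizacion1} only requires \eqref{equiv-inequ} almost everywhere.
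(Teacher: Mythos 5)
Your proof is correct, and its core idea coincides exactly with the paper's: choose $\vec a$ to be an arithmetic progression in $\alpha$ so that the relevant matrix becomes a Vandermonde matrix in the unimodular numbers $z_m=e^{2\pi i \alpha\cdot\la_m(\w)}$, and let the $(\alpha,\delta)$-separation lower-bound the Vandermonde determinant by $\delta^{\binom{k}{2}}$. The only difference is the routing. The paper deliberately deduces Theorem~\ref{separados} from its new characterization, Theorem~\ref{nueva caracterizacion}: it checks that the determinant is bounded away from zero on $\fii_{\vec a}(\La_\W(J))$, hence nonzero on its closure in $\T^{k\times k}$, and stops there. You instead go straight to Theorem~\ref{caracterizacion1} and convert the determinant bound into a bound on the smallest singular value via $s_k=|\det E|/\prod_{i<k}s_i\ge \delta^{\binom{k}{2}}k^{-(k-1)}$. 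That conversion is precisely the estimate $\delta\le\det\left(\left|E\right|^2\right)\le\mu\,B^{k-1}$ carried out inside the paper's proof of Theorem~\ref{nueva caracterizacion}, so you have in effect inlined that step. What your version buys is self-containedness and explicit constants $A=\delta^{2\binom{k}{2}}k^{-2(k-1)}$, $B=k^2$, with no need for the closure/compactness language; what the paper's version buys is a one-line proof once Theorem~\ref{nueva caracterizacion} is in place, which is the point the authors want to illustrate. One trivial remark: what the Vandermonde structure requires is that the $z_m(\w)$ be distinct (guaranteed by separation), not that the $a_j$ be distinct, so your closing parenthetical is slightly off-target but harmless.
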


We postpone the proof of this result to the end of this section. This theorem can be seen as a generalization of \cite[Theorem 1.1]{CC}. Indeed, following the terminology of \cite{CC},  we say that $\W$ is  \textit{admissible} for $\La$ if there exists $v\in \R^d$ and a number $n\in\N$ such that for almost every $\w\in I$ 
\begin{equation}\label{admissible}
	v\cdot \la_1(\w),\ldots,v\cdot \la_k(\w)
\end{equation}
are pairwise distinct integer numbers ($\mbox{mod}\ n$), where $\vec{\la}(\w) = (\la_1(\w),\dots,\la_k(\w))$. Note that in this case, $\La_\W(J)$ is $(v/n,\delta)$-separated, for $\delta=|1-e^{2\pi i \frac1n}|$ and some full measure set $J\subseteq I$. As a consequence of Theorem \ref{separados}, we recover the main result from \cite{CC}.

\begin{teo}
	Let $\W$ be a multi-tiling set that is admissible for $\La$. Then, $L^2(\W)$ admits a structured Riesz basis of exponentials.
\end{teo}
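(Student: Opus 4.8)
The plan is to derive this as an immediate corollary of Theorem~\ref{separados}. The entire argument consists of unpacking the definition of admissibility and checking that it forces the $(\alpha,\delta)$-separation hypothesis. So first I would recall the setup: $\W$ is admissible for $\La$ means there exist $v\in\R^d$ and $n\in\N$ such that for almost every $\w\in I$ the numbers $v\cdot\la_1(\w),\dots,v\cdot\la_k(\w)$ are pairwise distinct modulo $n$, where $\vec{\la}(\w)=(\la_1(\w),\dots,\la_k(\w))$ is the vector assembled from $\La_\w$. Let $J\subseteq I$ be the full-measure set on which this holds.

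Next I would set $\alpha := v/n$ and compute, for $\w\in J$ and $j\neq\ell$,
$$
\bigl|e^{2\pi i \alpha\cdot\la_j(\w)}-e^{2\pi i \alpha\cdot\la_\ell(\w)}\bigr| = \bigl|e^{2\pi i (v\cdot\la_j(\w))/n}-e^{2\pi i (v\cdot\la_\ell(\w))/n}\bigr|.
$$
Writing $p=v\cdot\la_j(\w)$ and $q=v\cdot\la_\ell(\w)$, these are integers with $p\not\equiv q\pmod n$, so $(p-q)/n$ is a rational number whose distance to the nearest integer is at least $1/n$. Hence $|e^{2\pi i p/n}-e^{2\pi i q/n}| = 2\bigl|\sin(\pi(p-q)/n)\bigr|\geq 2\sin(\pi/n) = |1-e^{2\pi i/n}| =: \delta > 0$, where the inequality uses that $\sin$ is increasing on $[0,\pi/2]$ and that $\mathrm{dist}((p-q)/n,\Z)\in[1/n,1/2]$. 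This shows $\La_\W(J)$ is $(v/n,\delta)$-separated with this explicit $\delta$, independent of $\w$, $j$, and $\ell$.

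Finally, having verified the hypothesis of Theorem~\ref{separados} with $\alpha = v/n$, $\delta = |1-e^{2\pi i/n}|$, and the full-measure set $J$, I would simply invoke that theorem to conclude that $L^2(\W)$ admits a structured Riesz basis of exponentials. There is essentially no obstacle here: the only mild subtlety is the trigonometric estimate bounding $|e^{2\pi i p/n}-e^{2\pi i q/n}|$ from below, which amounts to the elementary fact that $\{k/n : k\in\Z\}$ is $\tfrac1n$-separated in $\R/\Z$ combined with monotonicity of sine on $[0,\pi/2]$; and one should note that only the distinctness \emph{mod $n$} is used (the integrality of the $v\cdot\la_j(\w)$ beyond being well-defined mod $n$ plays no further role). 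The remark in the excerpt preceding the statement already records that $\La_\W(J)$ is $(v/n,\delta)$-separated, so in the write-up this corollary can be stated with a one-line proof referring back to Theorem~\ref{separados} and that observation.
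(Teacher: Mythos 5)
Your proposal is correct and follows exactly the paper's route: the paper likewise sets $\alpha=v/n$, observes that admissibility yields $(v/n,\delta)$-separation with $\delta=|1-e^{2\pi i/n}|$ on a full-measure set, and then invokes Theorem~\ref{separados}. Your explicit trigonometric verification of the separation bound is a correct filling-in of the one-line observation the paper makes just before stating the theorem.
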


It is known that the admissibility is not a necessary condition for a multi-tiling set $\Omega$ to admit a structured Riesz basis of exponentials (see \cite[Example 3.3]{CC}). Remarkably, the $(\alpha,\delta)$-separation is, nevertheless, a necessary condition for $2$-tiling sets, as we show in the following theorem.

\begin{teo}\label{necesario para 2tilings}
	Let $\W$ be a 2-tiling set such that $L^2(\W)$ admits a structured Riesz basis of exponentials. Then, there exists a full measure subset $J\subseteq I$, $ \alpha \in \R^d$ and $\delta>0$ such that $\La_\W(J)$ is $( \alpha,\delta)$-separated.
\end{teo}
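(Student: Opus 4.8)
The plan is to reduce the statement to the explicit $2\times 2$ matrices $E_{\vec{a},\vec{\la}(\w)}$ furnished by Theorem~\ref{caracterizacion1}, and to exploit the fact that when $k=2$ the single pairwise difference appearing in Definition~\ref{def:sep} is, up to modulus, the determinant of $E_{\vec{a},\vec{\la}(\w)}$. So a uniform lower bound on this determinant --- which is exactly what a structured Riesz basis provides --- will be tantamount to the desired $(\alpha,\delta)$-separation.

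First I would apply Theorem~\ref{caracterizacion1}: since $L^2(\W)$ admits a structured Riesz basis of exponentials, there exist $\vec{a}=(a_1,a_2)\in(\R^d)^2$ and constants $A,B>0$ with $A\|x\|^2\le\|E_{\vec{a},\vec{\la}(\w)}\,x\|^2\le B\|x\|^2$ for all $x\in\C^2$ and a.e.\ $\w\in I$. Let $J\subseteq I$ be the full-measure set where these inequalities hold, intersected if necessary with the full-measure set on which $|\La_\w|=2$, so that $\vec{\la}(\w)=(\la_1(\w),\la_2(\w))$ always consists of two distinct lattice points. Put $\alpha:=a_1-a_2$.

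Next I would run the determinant computation. Setting $S:=\tfrac12(a_1+a_2)\cdot(\la_1(\w)+\la_2(\w))$ and $D:=\tfrac12(a_1-a_2)\cdot(\la_1(\w)-\la_2(\w))$, one has $a_1\cdot\la_1(\w)+a_2\cdot\la_2(\w)=S+D$ and $a_2\cdot\la_1(\w)+a_1\cdot\la_2(\w)=S-D$, hence
$$\det E_{\vec{a},\vec{\la}(\w)}=e^{2\pi i(S+D)}-e^{2\pi i(S-D)}=2i\,e^{2\pi i S}\sin(2\pi D),$$
so $|\det E_{\vec{a},\vec{\la}(\w)}|=2\bigl|\sin\bigl(\pi(a_1-a_2)\cdot(\la_1(\w)-\la_2(\w))\bigr)\bigr|$. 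On the other hand, $|1-e^{i\theta}|=2|\sin(\theta/2)|$ yields
$$\bigl|e^{2\pi i\alpha\cdot\la_1(\w)}-e^{2\pi i\alpha\cdot\la_2(\w)}\bigr|=\bigl|1-e^{2\pi i\alpha\cdot(\la_2(\w)-\la_1(\w))}\bigr|=2\bigl|\sin\bigl(\pi(a_1-a_2)\cdot(\la_1(\w)-\la_2(\w))\bigr)\bigr|,$$
which is exactly $|\det E_{\vec{a},\vec{\la}(\w)}|$. Finally, the lower inequality $A\|x\|^2\le\|E_{\vec{a},\vec{\la}(\w)}\,x\|^2$ forces the smallest singular value of $E_{\vec{a},\vec{\la}(\w)}$ to be at least $\sqrt{A}$, and since for a $2\times 2$ matrix $|\det|$ is the product of the two singular values (the larger dominating the smaller), $|\det E_{\vec{a},\vec{\la}(\w)}|\ge A$ for every $\w\in J$. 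Combining, $\bigl|e^{2\pi i\alpha\cdot\la_1(\w)}-e^{2\pi i\alpha\cdot\la_2(\w)}\bigr|\ge A$ on $J$, i.e.\ $\La_\W(J)$ is $(\alpha,\delta)$-separated with $\alpha=a_1-a_2$ and $\delta=A$.

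I do not expect a genuine obstacle here, since the argument is essentially an exact computation; the point worth isolating is why it is special to $k=2$. For general $k$ the lower Riesz bound still gives $|\det E_{\vec{a},\vec{\la}(\w)}|\ge A^{k/2}$, but the determinant no longer factors through a single difference $e^{2\pi i\alpha\cdot\la_j(\w)}-e^{2\pi i\alpha\cdot\la_\ell(\w)}$, so that lower bound alone does not deliver $(\alpha,\delta)$-separation. The only routine matters to verify carefully are that $\la_1(\w)\ne\la_2(\w)$ on $J$ (immediate from the lexicographic ordering of the two-element set $\La_\w$) and the measurability and full-measure bookkeeping involved in intersecting the exceptional sets.
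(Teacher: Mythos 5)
Your proposal is correct and follows essentially the same route as the paper: both invoke Theorem~\ref{caracterizacion1} to get the uniform Riesz bounds on $E_{\vec{a},\vec{\la}(\w)}$, deduce $|\det E_{\vec{a},\vec{\la}(\w)}|\ge A$ from the lower bound, and identify this determinant (up to a unimodular factor) with the single pairwise difference $e^{2\pi i\alpha\cdot\la_1(\w)}-e^{2\pi i\alpha\cdot\la_2(\w)}$ for $\alpha=\pm(a_1-a_2)$. The only cosmetic difference is that you compute the determinant directly via a trigonometric identity, whereas the paper first factors out a unitary diagonal matrix and works with the reduced matrix $\widetilde{E}_{\vec{a},\vec{\la}(\w)}$; the two computations are equivalent.
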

\bdem
By Theorem \ref{caracterizacion1}, we know that there exists $\vec{a}=(a_1,a_2)\in \R^d\times\R^d$, and a full measure subset $J\subseteq I$ such that the matrices 
$$
E_{\vec{a},\vec{\la}(\w)}=\begin{pmatrix}
	e^{2\pi i a_1  \la_1} & e^{2\pi i a_2 \la_1}  \\
	
	e^{2\pi i a_1 \la_2}& e^{2\pi i a_2 \la_2}
\end{pmatrix}
$$
satisfy \eqref{equiv-inequ} for every $\w\in J$. These matrices can be factorized as
$$
E_{\vec{a},\vec{\la}(\w)}=\begin{pmatrix}
	e^{2\pi i a_1  \la_1} & 0  \\
	
	0& e^{2\pi i a_2 \la_2}
\end{pmatrix}
\begin{pmatrix}
	1& e^{2\pi i (a_2-a_1) \la_1}  \\			
	e^{2\pi i (a_1-a_2) \la_2}& 1
\end{pmatrix}
=:V_{\vec{a},\vec{\la}(\w)}  \widetilde{E}_{\vec{a},\vec{\la}(\w)}.
$$
Since the matrices $V_{\vec{a},\vec{\la}(\w)}$ are unitary, the matrices $ \widetilde{E}_{\vec{a},\vec{\la}(\w)}$ also satisfy \eqref{equiv-inequ}  uniformly in $J$. In particular, since $A$ is a lower bound for the smallest eigenvalue of the matrix  $|\widetilde{E}_{\vec{a},\vec{\la}(\w)}|^2$, we get that
$$
\det\left(\left| \widetilde{E}_{\vec{a},\vec{\la}(\w)}\right|^2\right)\geq A^2>0,
$$
also uniformly in $J$. By the properties of the determinant
$$
\det\left(\left| \widetilde{E}_{\vec{a},\vec{\la}(\w)}\right|^2\right)=\left|\det\left( \widetilde{E}_{\vec{a},\vec{\la}(\w)}\right)\right|^2=\left|1-e^{2\pi i (a_2-a_1)(\la_1-\la_2)}\right|^2.
$$
Hence, taking $ \alpha=a_2-a_1$, we obtain that 
$$
\left|e^{2\pi i  \alpha  \la_2}-e^{2\pi i  \alpha  \la_1}\right|\geq A,
$$
uniformly for every $\w\in J$. As a consequence, $\La_\W(J)$ is $( \alpha,A)$-separated.
\edem

We now provide another condition for $( \alpha,\delta)$-separation for a subset $\Gamma\subseteq (\R^d)^k$, which we state in a straightforward lemma. For this purpose, given $ \alpha\in\R^d$, we define the following subset of $\T$:
\begin{equation}\label{eq diferencias}
	\Delta_{\alpha}(\Gamma):=\{e^{2\pi i  \alpha (\gamma_j-\gamma_\ell)}:\ j\neq \ell,\ \vec\gamma\in\Gamma\}. 
\end{equation}
It is important to observe that the differences $\gamma_j-\gamma_\ell$ are between two different entries of some $\vec\gamma\in\Gamma$. The entries of two different elements of $\Gamma$ are not compared.

\begin{lem}\label{lem:separated}
	A set  $\Gamma\subseteq (\R^d)^k$ is $(\alpha,\delta)$-separated if and only if $1\notin \overline{\Delta_{ \alpha}(\Gamma)}$, where the closure is taken in $\T$. 
\end{lem}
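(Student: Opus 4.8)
The plan is to unwind both sides of the equivalence into a single quantitative statement about how far $1$ sits from the set $\Delta_{\alpha}(\Gamma)$ inside $\T$. The one computational ingredient is the elementary identity
$$
\bigl|e^{2\pi i \alpha \gamma_j} - e^{2\pi i \alpha \gamma_\ell}\bigr| = \bigl|e^{2\pi i \alpha(\gamma_j - \gamma_\ell)} - 1\bigr|,
$$
valid for all $\gamma_j,\gamma_\ell\in\R^d$, obtained by factoring out the unit-modulus scalar $e^{2\pi i \alpha \gamma_\ell}$. With this identity, the definition of $(\alpha,\delta)$-separation of $\Gamma$ becomes verbatim the statement that $|z-1|\ge\delta$ for every $z\in\Delta_{\alpha}(\Gamma)$; that is, $\Gamma$ is $(\alpha,\delta)$-separated exactly when $\Delta_{\alpha}(\Gamma)$ avoids the set $\{w\in\T:|w-1|<\delta\}$ around $1$.

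For the forward implication, assume $\Gamma$ is $(\alpha,\delta)$-separated for some $\delta>0$. Then $\Delta_{\alpha}(\Gamma)$ is contained in the closed set $\{w\in\T:|w-1|\ge\delta\}$, hence so is its closure $\overline{\Delta_{\alpha}(\Gamma)}$; since $\delta>0$, this closed set does not contain $1$, and therefore $1\notin\overline{\Delta_{\alpha}(\Gamma)}$.

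For the converse, assume $1\notin\overline{\Delta_{\alpha}(\Gamma)}$. Since $\T$ is a metric space and $\overline{\Delta_{\alpha}(\Gamma)}$ is a closed subset not containing $1$, the number
$$
d := \inf\bigl\{\, |w-1| \,:\, w\in\overline{\Delta_{\alpha}(\Gamma)}\,\bigr\}
$$
is strictly positive (with the convention that any $d>0$ works when the set is empty, which only occurs for $k=1$). In particular $|z-1|\ge d>0$ for every $z\in\Delta_{\alpha}(\Gamma)$, and invoking the identity above once more this says precisely that $\Gamma$ is $(\alpha,d)$-separated.

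There is essentially no obstacle in this lemma; it is a direct translation of definitions. The two points worth stating explicitly are the modulus identity above and the standard metric-space fact that the distance from a point to a closed set is positive precisely when the point is not in the set — the latter being exactly what upgrades the purely topological condition $1\notin\overline{\Delta_{\alpha}(\Gamma)}$ to the existence of a uniform separation constant. Implicitly the statement is read with $\delta$ existentially quantified: $\Gamma$ is $(\alpha,\delta)$-separated for some $\delta>0$ if and only if $1\notin\overline{\Delta_{\alpha}(\Gamma)}$.
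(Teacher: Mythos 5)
Your proof is correct, and the paper itself offers no proof of this lemma (it is explicitly labelled ``straightforward'' and stated without argument), so there is nothing to diverge from: your argument --- the modulus identity $|e^{2\pi i \alpha \gamma_j}-e^{2\pi i \alpha \gamma_\ell}|=|e^{2\pi i \alpha(\gamma_j-\gamma_\ell)}-1|$ plus the fact that a point lies outside a closed subset of a metric space iff its distance to that set is positive --- is exactly the intended one. Your closing remark that $\delta$ must be read as existentially quantified is also the correct reading of how the lemma is used in the paper (e.g.\ in the proofs of Theorems \ref{separados} and \ref{necesario para 2tilings}).
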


The condition in Lemma \ref{lem:separated} is more closely related to the characterization given in \cite{CUC}. In that work, the authors provided a sufficient and necessary condition on a $k$-tiling set $\W$ to ensure that $L^2(\W)$ admits a structured Riesz basis of exponentials. This condition involves the Bohr compactification of $\La^k$, denoted by $\overline{\La^k}$. More precisely, it is formulated in terms of a lifting of the matrix $E_{\vec{a},\vec{\la}}$ to a matrix function defined on the closure of $\La_\W(I)$ in the Bohr topology of $\overline{\La^k}$. As it is usual, we are identifying $\La_\W(I)$ with its image inside $\overline{\La^k}$ by the canonical inclusion. 

To conclude this work, we will show an alternative sufficient and necessary condition on a $k$-tiling set $\W$ such that $L^2(\W)$ admits a structured Riesz basis of exponentials. Our condition is closer in spirit to the $(\alpha,\delta)$-separation or the admissibility. We begin with the following definition.

\begin{fed}
	Given $\vec{a}=(a_1,\dots,a_k)\in(\R^d)^k$, consider the function $\fii_{\vec{a}}:\La^k\to\T^{k\times k}$ defined by
	$$
	\fii_{\vec{a}}(\vec{\la})=
	\begin{pmatrix}
		e^{2\pi i a_1 \la_1} &\ldots& e^{2\pi i a_k  \la_1}  \\
		\vdots&\ddots&\vdots\\
		e^{2\pi i a_1 \la_k}&\ldots& e^{2\pi i a_k  \la_k}
	\end{pmatrix}		
	$$
\end{fed}	

Clearly, $\T^{k\times k}$ is a compact subset of matrices, which can be thought of as a locally compact abelian group when endowed with the entrywise product. In that case, $\fii_{\vec{a}}$ becomes a group morphism. Since the determinant acts continuously in this set, we get the following alternative way to describe those $k$-tiling sets $\W$ for which $L^2(\W)$ admits a structured Riesz basis of exponentials. 

Moreover, analogously as in \cite{CUC}, we define the following two auxiliary sets. For a vector $\vec{v}\in\La^k$ we denote
$$I_{\vec{v}}:= \left\{ \w\in I \,:\, \vec{\la}(\w) = \vec{v}\right\},$$
and 
$$Q := \bigcup \left\{I_{\vec{v}}\,: \, \vec{v}\in \La^k \text{ such that } |\,I_{\vec{v}}\,|>0\right\}\subseteq I.$$
It is easy to check that $|I\setminus Q|=0$.

\begin{teo}\label{nueva caracterizacion}
	Let $\W\subset\R^d$ be a $k$-tiling set by translations of the lattice $\La$ and let $\vec{a}=(a_1,\dots,a_k)\in(\R^d)^k$. Then the following statements are equivalent: 
	\begin{enumerate}
		\item[i)] The set $\{\,e_{a_j + h}\,:\, h\in H,\, j=1,\dots, k\,\}$ is a structured Riesz basis of exponentials of $L^2(\W)$.
		\item[ii)] The determinant is different from zero in $\overline{\fii_{\vec{a}}(\La_\W(J))}$, where the closure is taken in $\T^{k\times k}$ and $J\subseteq Q$ is a full-measure subset.
	\end{enumerate}
\end{teo}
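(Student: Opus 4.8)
The plan is to reduce the statement to the already-established criterion of Theorem \ref{caracterizacion1}, namely that (i) is equivalent to the existence of uniform constants $A,B>0$ with $A\|x\|^2\leq\|E_{\vec a,\vec\la(\w)}x\|^2\leq B\|x\|^2$ for a.e. $\w\in I$ and all $x\in\C^k$. Since every entry of $E_{\vec a,\vec\la(\w)}$ has modulus one, the upper bound $B=k^2$ is automatic, so the whole content is the lower bound, which is equivalent to asking that the smallest singular value of $E_{\vec a,\vec\la(\w)}$ stay bounded away from zero uniformly in $\w$. Observe that $E_{\vec a,\vec\la(\w)}=\fii_{\vec a}(\vec\la(\w))$ by the very definition of $\fii_{\vec a}$, so the condition in (i) becomes: the quantity $\sigma_{\min}\bigl(\fii_{\vec a}(\vec\la(\w))\bigr)$ is essentially bounded below on $I$ (equivalently on $Q$, since $|I\setminus Q|=0$).

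First I would pass from the essential infimum over $\w$ to an honest infimum over values. Because $\vec\la(\cdot)$ is constant on each $I_{\vec v}$ and $Q=\bigcup\{I_{\vec v}:|I_{\vec v}|>0\}$, the essential range of $\w\mapsto\vec\la(\w)$ on $Q$ is exactly the set $\{\vec v\in\La^k:|I_{\vec v}|>0\}$; choosing $J\subseteq Q$ of full measure, the essential range of $\vec\la$ on $J$ is the same set. Hence $\essinf_{\w\in J}\sigma_{\min}(\fii_{\vec a}(\vec\la(\w)))=\inf\{\sigma_{\min}(\fii_{\vec a}(\vec v)):|I_{\vec v}|>0\}=\inf_{M\in\fii_{\vec a}(\La_\W(J))}\sigma_{\min}(M)$. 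So (i) is equivalent to $\inf_{M\in\fii_{\vec a}(\La_\W(J))}\sigma_{\min}(M)>0$.

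Next I would use compactness of $\T^{k\times k}$ and continuity of $M\mapsto\sigma_{\min}(M)$ (the smallest singular value is a continuous — indeed $1$-Lipschitz — function of the matrix) to show that the infimum over $\fii_{\vec a}(\La_\W(J))$ equals the infimum over its closure $\overline{\fii_{\vec a}(\La_\W(J))}$, and that this infimum is attained. Therefore $\inf_{M\in\fii_{\vec a}(\La_\W(J))}\sigma_{\min}(M)>0$ if and only if $\sigma_{\min}(M)>0$ for every $M$ in the (compact) closure, which is precisely the assertion that $\det M\neq 0$ for all $M\in\overline{\fii_{\vec a}(\La_\W(J))}$, since for a square matrix $\sigma_{\min}(M)>0\iff\det M\neq 0$. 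Chaining the equivalences gives (i) $\Leftrightarrow$ (ii).

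The only genuinely delicate point is making sure the reduction of $\essinf$ to $\inf$ over the closure is airtight: one must check that no null set can be exploited to lower the infimum, which is exactly why the theorem restricts $J$ to a full-measure subset of $Q$ rather than of $I$ — on $I\setminus Q$ the fibre value $\vec\la(\w)$ may occur on a null set only, and such values must be excluded from $\La_\W(J)$ but could still lie in its closure, which is harmless, or could be genuinely absent, which is what we want. Verifying this bookkeeping, together with the (standard) fact that $\sigma_{\min}$ controls the two-sided frame inequality \eqref{equiv-inequ}, is the main thing to get right; everything else is the continuity/compactness argument above.
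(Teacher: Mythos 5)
Your proposal is correct and follows essentially the same route as the paper: both reduce the statement to Theorem \ref{caracterizacion1}, identify $E_{\vec a,\vec\la(\w)}$ with $\fii_{\vec a}(\vec\la(\w))$, use the positive measure of the fibers $I_{\vec v}$ (via $J\subseteq Q$) to convert the essential infimum over $\w$ into an infimum over $\La_\W(J)$, and conclude by compactness of $\T^{k\times k}$ and continuity. The only cosmetic difference is that you phrase the link between the lower frame bound and the nonvanishing of the determinant through the continuity of $\sigma_{\min}$, while the paper argues with $\det\left(\left|E_{\vec a,\vec\la}\right|^2\right)$ directly, using a sequence-based contradiction in one direction and the inequality $\det\left(\left|E\right|^2\right)\leq \mu_{\vec\la(\w)}B^{k-1}$ in the other.
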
	
\begin{proof}
	First of all, note that the matrices in $\overline{\fii_{\vec{a}}(\La_\W(J))}$ have Frobenius norm exactly equal to $k$. Hence, they are also uniformly bounded in the spectral norm. Since the matrices 
	$E_{\vec{a},\vec{\la}}$ belong to this set, the upper bound in \eqref{equiv-inequ} always hold. Let $B$ denote this uniform upper bound.

	We now assume that statement i) holds. By Theorem \ref{caracterizacion1}, this implies that the lower bound in \eqref{equiv-inequ} holds for almost every $\w\in I$ and for every $x\in \C^k$. Let us assume that there exists a matrix $M\in \overline{\fii_{\vec{a}}(\La_\W(J))}$ such that $\det(M)=0$, and show that it leads to a contradiction. The existence of such $M$ implies that there exists a sequence $\{M_n\}_{n\in\N}$ with $M_n\in \fii_{\vec{a}}(\La_\W(J))$ and  $M_n \to M$. Each $M_n$ can be written as $M_n=E_{\vec{a},\vec{\la}_n}$, for some sequence $\{\vec\la_n\}_{n\in\N}$ in $\La_\W(J)$. 
	Using standard properties of the determinant, we get:
	$$
	\det\left(\left|E_{\vec{a},\vec{\la}_n}\right|^2\right)=\left|\det\left(E_{\vec{a},\vec{\la}_n}\right) \right|^2\xrightarrow[n\to\infty]{} \left|\det(M)\right|^2=0. 
	$$
	Let $\mu_{\vec{\la}_n}$ denote the smallest eigenvalue of $|E_{\vec{a},\vec{\la}_n}|^2$. From the above, if follows that $\mu_{\vec{\la}_n}\to 0$. Choose $n\in\N$ such that $\mu_{\vec{\la}_n}<A$, and let $x\in \C^k$ be an eigenvalue of $|E_{\vec{a},\vec{\la}_n}|^2$ associated to $\mu_{\vec{\la}_n}$. We have that 
	$$ \left\| E_{\vec{a},\vec{\la}_n} x \right\|^2=\mu_{\vec{\la}_n}\|x\|^2<A\|x\|^2.$$
	Since $\vec{\la}_n\in \La_\W(J)$ and $J\subseteq Q$, we conclude that there exists a set of positive measure, namely $I_{\vec{\la}_n}$, on which the lower bound in \eqref{equiv-inequ} fails. This gives a contradiction.
	
	Conversely, let us assume that statement ii) holds. Hence, there exists $\delta>0$ such that
	$
	|\det(M)|^2\geq \delta,
	$
	for every matrix $M\in \overline{\fii_{\vec{a}}(\La_\W(J))}$. Given any $\w\in J$, let $\mu_{\vec{\la}(\w)}$ denote the smallest eigenvalue of the matrix $|E_{\vec{a},\vec{\la}(\w)}|^2$. Then we obtain that
	$$
	\delta\leq \det\left(\left|E_{\vec{a},\vec{\la}(\w)}\right|^2\right) \leq \mu_{\vec{\la}(\w)} B^{k-1}.
	$$
	Therefore, it follows that $\mu_{\vec{\la}(\w)}\geq A$ where $A=\delta/B^{(k-1)}$, for every $\w\in J$. This implies that the left-hand side inequality in \eqref{equiv-inequ} holds for a.e. $\w\in I$ and every $x\in \C^k$, as we wanted to show.
\end{proof}

In comparison with the main result of \cite{CUC}, this theorem provides a more tractable condition, since the closure is taken in $\T^{k\times k}$ instead of in the Bohr compactification of $\La$.  Another difference is the following. In Theorems 3.3  and 4.1 of \cite{CUC}, the embedding of $\La_\W(Q)$ on $\overline{\La}$ is fixed, and the different vectors $\vec{a}\in (\R^d)^k$ lead to different (continuous) functions that have to be tested on the closure of $\La_\W(Q)$ inside $\overline{\La}$. In our result, for different vectors $\vec{a}\in (\R^d)^k$ we have different morphisms of 
$\La_\W(Q)$ inside $\T^{k\times k}$, but the continuous function that has to be different from zero in the closure is fixed. 

To conclude this section, we use Theorem \ref{nueva caracterizacion}, to provide a simple proof of Theorem \ref{separados}.

\bdem[Proof of Theorem \ref{separados}]
Assume that the set $\La_\W(J)$ is $(\alpha,\delta)$-separated for some $\alpha \in\R^d$, $\delta>0$, and a full measure subset $J\subseteq I$. We may assume that $J\subseteq Q$, otherwise we replace $J$ with $J\cap Q$ which is also a full measure subset of $I$. Consider the vector $\vec{a}=(\alpha,2\alpha,\ldots,k \alpha)\in (\R^d)^k$. Then, for any $\vec{\la}\in \La_\W(J)$ we have that
$$
\fii_{\vec{a}}(\vec{\la})=
\begin{pmatrix}
	e^{2\pi i \alpha \la_1} &\ldots& e^{2\pi i (\alpha  \la_1) k }  \\
	\vdots&\ddots&\vdots\\
	e^{2\pi i \alpha  \la_k}&\ldots& e^{2\pi i (\alpha\la_k) k }
\end{pmatrix},	
$$
which is a Vandermonde matrix. In particular, by the hypothesis on  $(\alpha,\delta)$-separation of $\La_\W(J)$, we obtain
$$
\left|\det(\fii_{\vec{a}}(\vec{\la}))\right|^2=\prod_{j\neq \ell} \left|e_\alpha(\la_j)-e_\alpha(\la_\ell)\right|\geq \delta^{k(k-1)}>0.
$$
Since the determinant is continuous as a function acting on the group of $k\times k$ matrices, we get that the determinant is different from zero in $\overline{\fii_{\vec{a}}(\La_\W(J))}$. 
The proof is then concluded by Theorem \ref{nueva caracterizacion}.
\edem

\section*{Acknowledgments}

\noindent E.A.  was supported by Grants PID2022-142202NB-I00, funded by \\ MICIU/AEIMCIN/AEI/10.13039/501100011033 and by\\ /10.13039/501100011033FEDER, UE,  PIP 11220210100954CO.

\noindent J.A. was supported by Grants PID2024-160033NB-I00 funded by \\ MICIU/AEIMCIN/AEI/10.13039/501100011033 and by\\ /10.13039/501100011033FEDER, UE,  PIP 11220210100954CO, and UNLP11X829. 

\noindent D.C. was supported by the European Union’s programme Horizon Europe, HORIZON MSCA 2021 PF 01, Grant agreement No. 101064206.

\printbibliography

\end{document}